\newtheorem{prop}{Proposition}
\newtheorem{defn}[prop]{Definition}
\newtheorem{notation}[prop]{Notation}
\newtheorem{lem}[prop]{Lemma}
\newtheorem{rmk}[prop]{Remark}
\newcommand{\A}[1]{{\vphantom{A}}_{#1}{A}}
\newcommand{\Id}{\mbox{Id}}
\newcommand{\vecteur}[1]{\mathnormal{\boldsymbol{#1}}}
\newcommand\vectq{\vecteur{q}}                                            
\newcommand\vectf{\vecteur{f}} 
\newcommand{\vectW}[1][]{\vecteur{W}_{\!\!#1\,}}
\newcommand{\vectY}[1][]{\vecteur{Y}_{\!\!#1\,}}
\newcommand{\trans}[1]{{#1}^{\texttt{t}}}
\newcommand{\contr}{\;{\boldsymbol \odot}\;}
\newcommand{\qy}{{q_y}}							
\newcommand{\qx}{{q_x}}
\newcommand{\qz}{{q_z}}
\newcommand{\fluxx}{{\varphi_x}}					 
\newcommand{\fluxy}{{\varphi_y}}
\newcommand{\tenseurxx}{{\varphi_{xx}}}
\newcommand{\tenseurxy}{{\varphi_{xy}}}
\newcommand{\e}{\varepsilon}
\newcommand{\ecarre}{{\varepsilon_2}}
\newcommand{\ecube}{\varepsilon_3}
\newcommand{\xxe}{xx\varepsilon}
\newcommand{\xecarre}{x\varepsilon_2}
\newcommand{\yecarre}{y\varepsilon_2}
\newcommand{\mfluxx}{m_{\fluxx}}					 
\newcommand{\mfluxy}{m_\fluxy}
\newcommand{\mtenseurxx}{m_\tenseurxx}
\newcommand{\mtenseurxy}{m_\tenseurxy}
\newcommand{\me}{m_\e}
\newcommand{\mecarre}{m_{\ecarre}}
\newcommand{\mecube}{m_{\ecube}}
\newcommand{\mxecarre}{m_{\xecarre}}
\newcommand{\myecarre}{m_{\yecarre}}
\newcommand{\mxxe}{m_{\xxe}}
\newcommand{\sfluxx}{s_\fluxx}					 
\newcommand{\sfluxy}{s_\fluxy}
\newcommand{\stenseurxx}{s_{\tenseurxx}}
\newcommand{\stenseurxy}{s_{\tenseurxy}}
\newcommand{\se}{s_{\e}}
\newcommand{\sxecarre}{s_{\xecarre}}
\newcommand{\syecarre}{s_{\yecarre}}
\newcommand{\secarre}{s_{\ecarre}}
\newcommand{\secube}{s_{\ecube}}
\newcommand{\sxxe}{s_{\xxe}}
\newcommand{\sigmafluxx}{\sigma_\fluxx}					 
\newcommand{\sigmafluxy}{\sigma_\fluxy}
\newcommand{\sigmatenseurxx}{\sigma_{\tenseurxx}}
\newcommand{\sigmatenseurxy}{\sigma_{\tenseurxy}}
\newcommand{\sigmae}{\sigma_{\e}}
\newcommand{\sigmaxecarre}{\sigma_{\xecarre}}
\newcommand{\sigmayecarre}{\sigma_{\yecarre}}
\newcommand{\sigmaecarre}{\sigma_{\ecarre}}
\newcommand{\eerho}{E_\e^\rho}					 
\newcommand{\eeqx}{E_\e^\qx}
\newcommand{\eeqy}{E_\e^\qy}
\newcommand{\eeqz}{E_\e^\qz}
\newcommand{\eecarrerho}{E_{\ecarre}^\rho}
\newcommand{\eecarreqx}{E_{\ecarre}^\qx}
\newcommand{\eecarreqy}{E_{\ecarre}^\qy}
\newcommand{\efluxxrho}{E_{\fluxx}^\rho}
\newcommand{\efluxxqx}{E_{\fluxx}^\qx}
\newcommand{\efluxxqy}{E_{\fluxx}^\qy}
\newcommand{\efluxyrho}{E_\fluxy^\rho}
\newcommand{\efluxyqx}{E_\fluxy^\qx}
\newcommand{\efluxyqy}{E_\fluxy^\qy}
\newcommand{\etenseurxxrho}{E_\tenseurxx^\rho}
\newcommand{\etenseurxxqx}{E_\tenseurxx^\qx}
\newcommand{\etenseurxxqy}{E_\tenseurxx^\qy}
\newcommand{\etenseurxyrho}{E_\tenseurxy^\rho}
\newcommand{\etenseurxyqx}{E_\tenseurxy^\qx}
\newcommand{\etenseurxyqy}{E_\tenseurxy^\qy}
\newcommand{\exxerho}{E_{\xxe}^\rho}
\newcommand{\exxeqx}{E_{\xxe}^\qx}
\newcommand{\exxeqy}{E_{\xxe}^\qy}
\newcommand{\execarrerho}{E_{x\ecarre}^\rho}
\newcommand{\execarreqx}{E_{x\ecarre}^\qx}
\newcommand{\execarreqy}{E_{x\ecarre}^\qy}
\newcommand{\eyecarrerho}{E_{y\ecarre}^\rho}
\newcommand{\eyecarreqx}{E_{y\ecarre}^\qx}
\newcommand{\eyecarreqy}{E_{y\ecarre}^\qy}
\newcommand{\eecuberho}{E_{\ecube}^\rho}
\newcommand{\eecubeqx}{E_{\ecube}^\qx}
\newcommand{\eecubeqy}{E_{\ecube}^\qy}
\newcommand{\dis}{\displaystyle}
\newcommand{\guill}[1]{\textquotedblleft{#1}\textquotedblright}
\newcommand{\pt}{\mbox{\begin{LARGE}$\cdot$\end{LARGE}}}
\newcommand{\qq}{\forall\,}
\newcommand{\rg}{S_{n,d,N}}
\newcommand{\DQ}[2]{\textup{$\textsf{D}_{#1}\textsf{Q}_{#2}$}\xspace}	   
\newcommand{\ddqq}{\DQ{d}{q}}
\newcommand{\ddqn}{\DQ{2}{9}}
\newcommand{\ddqt}{\DQ{2}{13}}
\newcommand{\dtqv}{\DQ{3}{27}}
\newcommand{\dtqd}{\DQ{3}{19}}
\newcommand{\eqq}[1]{\text{\tt #1}}
\newcommand{\grandO}{{\mathcal O}}
\newcommand{\rotation}[1][d]{SO_{#1}({\mathbb R})}
\newcommand{\ppviden}[1]{$\text{I}_{\text{#1}}$}
\newcommand{\ppvidet}[1]{$\text{I}_{\text{#1}}^{\prime}$}
\newcommand{\ppn}[1]{{\upshape(\ppviden{#1})}}
\newcommand{\ppns}[2]{{\upshape(\ppviden{#1}--\ppviden{#2})}}
\newcommand{\ppt}[1]{{\upshape(\ppvidet{#1})}}
\newcommand{\ppts}[2]{{\upshape(\ppvidet{#1}--\ppvidet{#2})}}\newlength{\taillemargehypo}
\newlength{\taillelabelhypo}
\newenvironment{prpn}[1]%
{%
  \itshape%
  \begin{list}%
    {{\upshape(\ppviden{#1})\hfill\null}}%
    {\setlength{\labelwidth}{\taillelabelhypo}%
     \setlength{\leftmargin}{\taillemargehypo}%
     \setlength{\itemsep}{0pt}%
     \setlength{\parsep}{0pt}%
     \setlength{\topsep}{0.25\baselineskip}%
    }%
  \item
}%
{%
  \end{list}%
}%
\newenvironment{prpt}[1]%
{%
  \itshape%
  \begin{list}%
    {{\upshape(\ppvidet{#1})\hfill\null}}%
    {\setlength{\labelwidth}{\taillelabelhypo}%
     \setlength{\leftmargin}{\taillemargehypo}%
     \setlength{\itemsep}{0pt}%
     \setlength{\parsep}{0pt}%
     \setlength{\topsep}{0.25\baselineskip}%
    }%
  \item
}%
{%
  \end{list}%
}%
\def\subsection{\@startsection{subsection}{2}%
	\z@{.5\linespacing\@plus.7\linespacing}{.25\linespacing}%
	{\normalfont\bfseries}}
\def\subsubsection{\@startsection{subsubsection}{3}%
	\z@{.5\linespacing\@plus.7\linespacing}{.25\linespacing}%
	{\normalfont\itshape}}
\title[Linear LB schemes: parameters choices and isotropy properties]{Linear Lattice Boltzmann Schemes for Acoustic: \\ 
parameters choices and isotropy properties}
\author[orsay]{Adeline Augier}\email{Adeline.Augier@math.u-psud.fr}
\author[orsay,cnam]{Fran\c{c}ois Dubois}\email{Francois.Dubois@math.u-psud.fr}
\author[orsay]{Benjamin Graille}\email{Benjamin.Graille@math.u-psud.fr}
\address[orsay]{Universit\'e Paris-Sud, Laboratoire de Math\'ematiques, UMR 8628, Orsay, F-91405, France}
\address[cnam]{Conservatoire National des Arts et M\'etiers, Department of mathematics, Paris, France}
\begin{document}

\begin{abstract}

In this paper, we investigate the numerous parameters choi\-ces for linear lattice Boltzmann schemes according to the definition of the isotropic order given in \cite{ADG11}. 
This property---written in a general framework including all of the \DQ{d}{q} schemes---can be read through a group operation. 
It implies some relations on the parameters of the scheme (equilibrium states and relaxation times) that give rigorous methodology to select them according to the desired order of isotropy. 
For acoustic applications in two spaces dimensions (namely \ddqn and \ddqt schemes) this methodology is used to propose a full description of the sets of parameters that involve isotropy of order $m$ ($m\in\{1,2,3,5\}$ for \ddqn and $m\in\{1,2\}$ for \ddqt).
We then propose numerical illustrations for the \ddqn scheme.
\end{abstract}
\keywords{Lattice Boltzmann schemes, isotropy, formal calculus, Taylor expansion method}
\maketitle

\section*{Introduction}
\label{sec:intro}
Lattice Boltzmann schemes \cite{H94,LL00,D05} are numerically very interesting because of their efficiency and then represent a promising field in computation fluid dynamics \cite{CWSD92,YGL05}. However, since the directions of the lattice are privileged, these schemes are \textit{a priori} not isotropic even though they are used to solve isotropic phenomena. All the more, \cite{DL11} mentions that isotropy also improves the lattice Boltzmann scheme in the sense of the stability.  Thus, the isotropy properties of the \ddqq schemes (where $d$ represents the space dimension and $q$ the number of discrete velocities) are the aim of numerous previous works though isotropy is not defined in the same way: for instance, isotropy of the stress tensor required by fluid equations in \cite{CWSD92}, isotropy by solving an eigenvalues problem in \cite{LL00}.

As it is explained in \cite{ADG11}, because of the criss-cross pattern of the scheme, it is natural to look for parameters that give the same behavior on each axes. By extend, isotropy can be related to the invariance of the spatial frame by all the special orthogonal transformations. Space transformations are actually considered in \cite{vdSE99,YGL05,CGO08} for discrete cases: isometries of the Bravais lattice in \cite{ vdSE99}, discrete rotations in \cite{YGL05,CGO08}. An abstract way is considered in \cite{CGO08} in order to obtain a systematic procedure to construct higher-order BGK models. 

In \cite{ADG11}, we give a rigorous definition for a \DQ{d}{q} scheme to be isotropic. This definition is based on an invariance of the equivalent equations with regard to all the special orthogonal transformations of the space and can be used for every scheme. Moreover, it gives a procedure to precise the parameters (namely equilibrium states and relaxation times) of linear and orthogonalized schemes in order to improve them in the sense of the isotropy. This methodology finds some set of coefficients again that are already known thanks to the experiment. 

In the first Section, we recall some notations about the linear \ddqq scheme defined with orthogonalized moments and then the definition of the equivalent equations that come from  a Taylor expansion of the lattice Boltzmann scheme \cite{D05}. 

In the second Section, we briefly recall the definition of the isotropy given in \cite{ADG11}. A \DQ{d}{q} scheme is said isotropic at order $m$ if the equivalent equations of order $m$ are isotropic. This definition is written thanks to a group operation and gives an easy systematic procedure to precise some of the parameters of the linear lattice Boltzmann scheme. The end of this Section consists in the calculation of the degrees of freedom that we have to take into account for every scheme. 

  In the third Section, we focus on acoustic applications using the \ddqn and \ddqt schemes. We first recall the definition of this particular schemes (moments and equilibrium states) and the methodology of the formal computation. Then we give a total description of the sets of parameters that involve isotropy of order $m$ ($m$ from 1 to 5 for the \ddqn scheme and to 2 for the \ddqt scheme). Concerning the \ddqt scheme, we also propose a selected choice of sets of parameters that involve isotropy at third order.

In the last Section, preliminary numerical results are discussed to illustrate the different orders of isotropy for the \ddqn scheme.

\section{Lattice Boltzmann method and equivalent equations}
In this Section, we first recall some notations about the lattice Boltzmann method and more precisely we focus on a linear LB scheme with orthogonalized moments. We then give the formal equivalent equations of order $m$ that is the partial differential equations with which the scheme is consistent at order $m$, for all integer $m$.

\subsection{Notations for the lattice Boltzmann method}
\label{sec:notations}

We use the notation proposed by d'Humi\`eres in \cite{H94}: we consider a regular lattice in $d$ dimensions $\mathcal{L}$ with typical mesh size $\Delta x$. The time step $\Delta t$ is determined thanks to the velocity scale $\lambda$ by the relation:  
\begin{equation*}
\Delta t=\dfrac{\Delta x}{\lambda}.
\end{equation*}
For the $q$-velocities scheme denoted by \ddqq, we introduce $V=(v_j)_{0\leq j\leq q-1}$ the set of $q$ velocities and we assume that for each node $x$ of $\mathcal{L}$, and  each $v_j$ in $V$, the point $x+v_j\Delta t$ is also a node of the lattice $\mathcal{L}$. 

The aim of the \ddqq scheme is to precise the particle distribution $\vectf=\trans{(f_j)}_{0\leq j\leq q-1}$ for $x\in\mathcal{L}$ and discrete values of time $t$ by solving the following PDEs:
\begin{equation}
\label{edp}
\dfrac{\partial f_j}{\partial t}+v_j\cdot\nabla f_j=-\dfrac{1}{\tau_j}(f_j-f_j^{\mbox{eq}}),\quad 0\leq j\leq q-1,
\end{equation}
where $f_j^{\mbox{eq}}$ describes the distribution $f_j$ at the equilibrium and $\tau_j$ is the relaxation time (related on $f_j$). 

The interest of the \ddqq scheme lives in the method of resolution of \eqref{edp}: we proceed in two steps like in a splitting method as explained below.

\begin{itemize}

\item First, we are interested in the relaxation $\vectf\rightarrow\vectf^*$ step that consists in solving  
\begin{equation}
\label{collision}
\dfrac{\partial f_j^*}{\partial t}=-\dfrac{1}{\tau_j}(f_j-f_j^{\mbox{eq}}),\quad 0\leq j\leq q-1.
\end{equation}
In order to solve \eqref{collision}, we first consider the moments at equilibrium. These moments are divided into two types: the ones that are conserved at equilibrium denoted by $\vectW\in\mathbb{R}^{N}$ and the ones that are not conserved at equilibrium denoted by $\vectY\in\mathbb{R}^{q-N}$. More precisely, because of the acoustic applications, the moments $\vectW$ and $\vectY$ are linear combinations of the distribution functions $\vectf$ (i.e. there exists an invertible matrix $M=(M_{ij})_{0\leq i,j\leq q-1}$ such that $\trans{(\trans{\vectW},\trans{\vectY})}=M\,\vectf$). The size and the definition of the matrix $M$ depend on the scheme, however for every \ddqq scheme the $N$ first lines of $M$ are the same. In fact we have $\vectW=\trans{(\rho,\trans{\vectq})}$ where the total density $\rho$ and the momentum $\vectq$ are given by 
\begin{equation*}
\vectW[0]=\rho=\sum_{j=0}^{q-1}f_j
\qquad \mbox{and}\qquad 
\vectW[\alpha] = \vectq_\alpha= \sum_{j=0}^{q-1}v_j^\alpha f_j,\ 1\leq \alpha\leq d.
\end{equation*}
Now, we define the moments after the relaxation thanks to the relations:
\begin{equation}
\label{moment_eq}
\left\{\begin{array}{rcll}
\vectW[k]^*&=&\vectW[k],&0\leq k\leq N-1\\
\vectY[k]^*&=&\vectY[k]+s_k(\vectY[k]^{\mbox{eq}}-\vectY[k]),&N\leq k\leq q-1,
\end{array}\right.
\end{equation}
where, for $N\leq k\leq q{-}1$, $s_k$ is related to all the $\tau_j$, $N\leq j\leq q{-}1$ and $\vectY[k]^{\mbox{eq}}$ is the moment $\vectY[k]$ at equilibrium. For stability reasons, we choose $s_k$ such that $0<s_k<2$, $N\leq k\leq q{-}1$. In order to simplify the calculations, we introduce the reals $\sigma_k$, $N\leq k\leq q{-}1$, by
\begin{equation}
\label{eq:sigma}
 \sigma_k\ =\ \dfrac{1}{s_k}-\dfrac{1}{2},\quad \sigma_k>0.
\end{equation}
As the scheme is supposed to be linear, the moments at equilibrium $\vectY^{\mbox{eq}}$ linearly depend on the conserved moments $\vectW$ through a matrix denoted by $E$, so that equation~\eqref{moment_eq} reads:
\begin{equation*}
\left(\begin{array}{c}
         \vectW^*\\
	 \vectY^*
        \end{array}
\right) = J \left(\begin{array}{c}
         \vectW\\
	 \vectY
        \end{array}\right),
\qquad \text{with} \quad
J =  \left(\begin{array}{cc}
   \Id_{N}&0\\
    S\,E&\Id_{q-N}-S
  \end{array}\right),
\end{equation*}
where $S$ is the diagonal matrix of the relaxation times $s_k$, $N\leq k\leq q{-}1$. Finally, we reconstruct the distribution after the relaxation with $f^*\ =\ M^{-1}\,\trans{(\trans{\vectW^*},\trans{\vectY^*})} $. 

\item Second, we solve the transport step  
\begin{equation}
\label{advection}
\dfrac{\partial f_j}{\partial t}+v_j\cdot\nabla f_j=0,\quad 0\leq j\leq q-1,
\end{equation}
thanks to the characteristics method: 
\begin{equation}
f_j(x+v_j\Delta t,t+\Delta t)\ = \ f_j^*(x,t),\qquad \forall\,x\in\mathcal{L},\ 0\leq j\leq q-1.
\end{equation}

\end{itemize}

Finally a time step of the linear lattice Boltzmann scheme \ddqq reduces to:
\begin{equation}
 \label{ddqq}
 \vectf(x+v_j\Delta t,t+\Delta t)\  =\ M^{-1}\,J\,M\,\vectf(x,t),\qquad \qq x\in\mathcal{L},\ 0\leq j\leq q-1. 
\end{equation}

\subsection{Equivalent equations}
\label{sec:equivalent_equations}

By convention, we denote $\pt^i$ the $i$th line and $\pt_j$ the $j$th column of a tensor $\pt$. 
By convention, a Latin letter is an index related to the moments (of size $N$) while Greek letter represents an index related to the space dimension (of size $d$). We then introduce the system of $N$ equivalent equations \cite{D05} of order $m$ with which the linear lattice Boltzmann scheme is consistent at order $m$ (that is to say that the rest is $\grandO(\Delta t^m)$):

\begin{equation}
 \label{equivalent_equations}
\partial_t\vectW^i+ \sum_{n=1}^m\A{n}^i  \contr \nabla^n\vectW\ =\ 0,\ 1\leq i\leq N,
\end{equation}

where $\A{n}^i\in\rg,\ 1\leq n\leq m,\ 1\leq i\leq N$ are tensors of order $n+2$ that take into account the coefficients of $E$ and $S$. 
The space $\rg$ is defined as the quotient space $\rg=\mathbb{R}^{N^2d^n}/\sim$, where two tensors are equivalent for the equivalence relation $\sim$ if the associate partial differential operators are the same (using Schwarz's theorem). By convention, the maximal contraction operator $\contr$ is defined by, $ 1\leq i\leq N,\ 1\leq n\leq m$: 
 \begin{equation*}
 \left(\A{n}\contr\nabla^n\vectW\right)^i\ :=\ \A{n}^i\contr\nabla^n\vectW\ :=\ \dis\sum_{\substack{1\leq j\leq N\\ 1\leq \alpha_1,\cdots,\alpha_n\leq d}}\A{n}_j^{i,\alpha_1,\cdots,\alpha_n}\partial_{\alpha_1}\cdots\partial_{\alpha_n}\vectW^j. 
 \end{equation*}
These equivalent equations come from a formal calculus explained in \cite{DL11} and an algorithm that is easy to use is described in \cite{Dpreparation}.

\section{Isotropy condition}
In this Section, we first briefly recall the construction of the definition of isotropy for a \ddqq scheme given in \cite{ADG11}. The \ddqq scheme is characterized through the set of its equivalent equations. And as these equations are PDEs, it is natural to derive this definition from the definition of an isotropic system of PDEs.
Consequently the isotropy reads as a group operation and involves a set of equations on the coefficients of the matrices $E$ and $S$. The second step of this Section consists in counting the number of degrees of freedom given by the isotropy. 

\subsection{Isotropy and Algebra}
\label{sec:isotropy}

We first recall the definition of an isotropic system of PDEs:
\begin{defn}
\label{def:isotropyPDE}
 The system of PDEs
\begin{equation}
 \label{PDE}
\partial_t\vectW^i+ \sum_{n=1}^M\A{n}^i\contr\nabla^n\vectW\ =\ 0,\ 1\leq i\leq N,
\end{equation}
is said isotropic if it is invariant by special orthogonal transformation of the frame. 
\end{defn}

For $r$ an special orthogonal transformation of the frame in $d$ spaces dimensions, we define the orthogonal matrix $R(r)$ by
\begin{equation*}
R(r) := \left(\begin{array}{cc}1&0\\ 0&r\end{array}\right),
\end{equation*}
such that $R(r)^{-1}\vectW$ is the vector of the conserved moments in the new frame. 

\begin{rmk}
In this study, we focus on the group of special orthogonal transformations in two dimensions that is the group of rotations $\rotation[2]$. It can be parametrized with one real number corresponding to the rotation angle, so that $r$ reads
\begin{equation*}
\left(\begin{array}{cc}\cos(\theta)&-\sin(\theta)\\ \sin(\theta)&\cos(\theta)\end{array}\right),\quad \theta\in\mathbb{R}.
\end{equation*}
\end{rmk}

As it is proven in \cite{ADG11}, the isotropy property can be read through a group operation defined in the following Definition. Furthermore, this definition gives a set of equations that are really easy to use and that give some relations between the parameters of the scheme. 
\begin{defn}
Let $n$ be in $\mathbf{N}^*$. Then the group operation $\Phi_n\ :\ \rotation\times\rg\rightarrow\rg$ is defined for $1\leq i,j\leq N$, $1\leq\alpha_1,\cdots,\alpha_n\leq d$, $1\leq n\leq m$, by the relation:
\begin{multline}
\label{def:Phi_n}
\left(\Phi_n(r)(\A{n})\right)^{i,\alpha_1,\cdots,\alpha_n}_j:=\\ \sum_{{1\leq \beta_1,\cdots,\beta_n\leq d}}\,\sum_{{1\leq k,l\leq N}}\left(R(r)\right)^i_l\A{n}^{l,\beta_1,\cdots,\beta_n}_kr^{\alpha_1}_{\beta_1}\cdots r^{\alpha_n}_{\beta_n} (R(r)^{-1})^k_j.
\end{multline}
\end{defn}
We then obtain a characterization for a PDE to be isotropic (for the proof see \cite{ADG11}):
\begin{prop}
\label{prop:isotropy}
 Let \eqref{PDE} be a PDE of order $m$. It is isotropic if the tensor $\A{n}$ is a fixed point of $\Phi_n$, $1\leq n\leq m$, that is $\Phi_n(r)(\A{n})=\A{n}$, $\qq r\in\rotation$, $1\leq n\leq m$. 
\end{prop}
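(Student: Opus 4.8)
The plan is to connect the abstract group operation $\Phi_n$ defined in \eqref{def:Phi_n} to the concrete statement that the system \eqref{PDE} is unchanged when the spatial frame is rotated. The key observation is that the definition of $\Phi_n$ is engineered precisely so that $\Phi_n(r)(\A{n})$ is the tensor of coefficients that one reads off when \eqref{PDE} is rewritten in the rotated frame. So the entire argument reduces to: (i) compute how \eqref{PDE} transforms under a frame rotation $r\in\rotation$, and (ii) identify the resulting transformed coefficient tensor with $\Phi_n(r)(\A{n})$, so that invariance of the system becomes exactly the fixed-point condition $\Phi_n(r)(\A{n})=\A{n}$ for every $n$.

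First I would set up the change of frame. A rotation $r$ of the spatial frame acts on the conserved moments through the block matrix $R(r)$, so the moments in the new frame are $\vectWt=R(r)^{-1}\vectW$, equivalently $\vectW=R(r)\vectWt$. Simultaneously the spatial derivatives transform contravariantly: under $x\mapsto rx$ the operators satisfy $\partial_\alpha=\sum_\beta r^\alpha_\beta\,\widetilde\partial_\beta$ (the same matrix $r$ producing the factors $r^{\alpha_1}_{\beta_1}\cdots r^{\alpha_n}_{\beta_n}$ in \eqref{def:Phi_n}). The scalar time derivative $\partial_t\vectW^i$ is untouched as a differential operator but its components mix through $R(r)$. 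Substituting $\vectW=R(r)\vectWt$ into \eqref{PDE} and multiplying the $i$th equation on the left by $R(r)^{-1}$ to express everything in the new moment components, each term $\A{n}^i\contr\nabla^n\vectW$ becomes, after collecting the rotation factors on the derivatives and the $R(r)$, $R(r)^{-1}$ factors on the moment indices, precisely $\bigl(\Phi_n(r)(\A{n})\bigr)^i\contr\nabla^n\vectWt$. This is the computational heart of the argument, and it is mostly bookkeeping of indices: the Latin indices $k,l$ carry the $R(r)$ conjugation and the Greek indices $\beta_1,\dots,\beta_n$ carry the derivative rotation, exactly matching the two sums in \eqref{def:Phi_n}.

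Having done this, the system written in the new frame reads $\partial_t\vectWt^i+\sum_{n=1}^m\bigl(\Phi_n(r)(\A{n})\bigr)^i\contr\nabla^n\vectWt=0$. By Definition \ref{def:isotropyPDE}, isotropy means this transformed system coincides with the original one for every $r\in\rotation$, i.e. the transformed coefficient tensors equal the original coefficient tensors (as elements of the quotient $\rg$, so the identification is only required up to the equivalence $\sim$). Matching term by term in $n$—which is legitimate because derivatives of distinct orders $n$ are linearly independent as differential operators—yields $\Phi_n(r)(\A{n})=\A{n}$ for each $n$ with $1\leq n\leq m$ and every $r$. Conversely, if each $\A{n}$ is a fixed point, the transformed system is identical to the original, hence the system is invariant, establishing isotropy.

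I expect the main obstacle to be the careful derivation of the transformation law in step (i), specifically keeping the two distinct roles of $R(r)$ straight: it conjugates the moment (Latin) indices, appearing as $R(r)$ on the output index $i$ and $R(r)^{-1}$ on the contracted input index $j$, whereas the pure rotation $r$ acts only on the spatial (Greek) derivative indices. One must also check consistency of the first block: the conserved density $\rho$ is a scalar (the $1$ in the top-left of $R(r)$) while the momentum $\vectq$ rotates as a vector, and it is this block structure of $R(r)$ that guarantees $\vectWt$ really is the vector of conserved moments in the rotated frame, so that the transformed equations are of the same admissible form \eqref{PDE}. Since the detailed index contraction is already carried out in \cite{ADG11}, I would present the conjugation identity explicitly and then cite \cite{ADG11} for the verification that the collected factors reproduce \eqref{def:Phi_n} exactly.
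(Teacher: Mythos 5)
Your argument is correct and is precisely the proof this paper relies on: the paper itself states Proposition~\ref{prop:isotropy} without proof, deferring to \cite{ADG11}, and the cited proof is exactly your change-of-frame computation identifying the coefficients of the system rewritten in the rotated frame with $\Phi_n(r)(\A{n})$ (your index-convention worry about $r$ versus $r^{-1}$, i.e.\ which side carries $R(r)$ and which $R(r)^{-1}$, is immaterial because the fixed-point condition is quantified over all $r\in\rotation$ and $r\mapsto r^{-1}$ is a bijection of the group). Your term-by-term matching in $n$, performed in the quotient $\rg$ modulo $\sim$, is also the right way to split the invariance into the $m$ separate fixed-point conditions; note that you additionally prove the converse implication, which the proposition as stated does not claim.
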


Finally, we recall the definition for a lattice Boltzmann scheme to be isotropic \cite{ADG11}:
\begin{defn}
\label{def:isotropy}
A lattice Boltzmann scheme is said isotropic at order $m$ if the system of equivalent equations (\ref{equivalent_equations}) at order $m$ is isotropic. \\
Furthermore, we denote by $L_N(r)\ :=\ \sum_{1\leq n\leq N}(\Phi_n(r)(\A{n})-\A{n}){\triangle t}^n$ the lack of isotropy at order $N$ for the special orthogonal transformation $r$.
\end{defn}

\subsection{Consequences on the degrees of freedom}
\label{sec:ddl}
In this Subsection, we count the degrees of freedom that have to be taken into account for studying  isotropy. The total number of the parameters that can be freely chosen is indeed reduced by considering the relations imposed by the physics.

Let us make the count of the equations and  of the unknowns:

\begin{itemize}
 \item There are $N^2\,(d+n-1)!/(n!(d-1)!)$ different components of $\A{n}$ each one corresponding to a relation for isotropy thanks to Proposition~\ref{prop:isotropy} (\cite{ADG11}). 
 \item Then, we take into account the equilibrium states, namely the coefficients of the matrix $E$ defined in Section~\ref{sec:notations}. The matrix $E$ is usually chosen sparse due to the experiments and to the physical \guill{good sense}. It is remarkable, that the isotropy condition for the \ddqq scheme could give some justifications of certain traditional choices of coefficients. However, in this paper, $E$ is chosen full in order to give as degrees of freedom as possible, given then $N(q-N)$ additional parameters. 
 \item It remains to consider the relaxations times.  Actually, as it is recalled for example in \cite{ADG11}, there exists essentially three types of lattice Boltzmann schemes according to the values of the relaxation times: the Bhatnagar - Gross - Krook (BGK) scheme, the Two Relaxation Times (TRT) scheme and the Multiple Relaxation Times (MRT) scheme. Since our purpose is precise the relaxation times in order to improve the behavior of the scheme in the sense of the isotropy, we consider the third case that is the most general (even if it is not the most often used). Thus, we consider $q-N$ independent unknown parameters on $S$. 
\end{itemize}

Finally, as we consider the most general \ddqq scheme in the linear case, there are $(q-N)(N+1)$ free parameters and $N^2(d+n-1)!/(n!(d-1)!)$ equations given by the isotropy conditions. 
However, all of these parameters do not have to be precise by the isotropy conditions: some are linked with some physical properties through equivalent equations \cite{D05}. 

\begin{itemize}
 \item The linear equilibrium state of the energy described by $\eerho\rho\lambda^2+\eeqx\qx\lambda^2+\eeqy\qy\lambda^2+\eeqz\qz\lambda^2$ is such that the density contribution $\eerho$ is linked with the sound velocity $c_0$. For example, as it is recalled in \cite{LL00,LL03,DL09,DL11}, we have $\eerho=6c_0^2-4$ for the \ddqn scheme, $\eerho=26c_0^2-28$ for the \ddqt scheme, $\eerho=57c_0^2/\lambda^2-30$ for the \dtqd scheme, and $\eerho=3c_0^2-2$ for the \dtqv scheme.  
In fact, all of these relations are given by the physicals properties that imply the values $\A{1}^{21}_1=\A{1}^{32}_1=c_0^2$, namely the contribution of $\partial \rho/\partial x$ (respectively $\partial \rho/\partial y$) in the conservation equation that describes the evolution of the momentum $q_x$ (respectively $q_y$). 
\item Furthermore, since there exists a \ddqn scheme consistent with the Navier-Stokes equations (see \cite{D05}), a link shall be done between both the viscosities of the considered fluid and two of the relaxation times (see for example \cite{DL11,LL00}). Namely, if the shear viscosity is denoted by $\zeta$ and the bulk viscosity by $\mu$, in the case of the \ddqn scheme, we get
\begin{equation}
\label{eq:viscosities}
\mu=\tfrac{1}{3}\lambda\,\Delta x\,\sigmatenseurxx\mbox{ and }\zeta=\lambda\,\Delta x\,\sigmae \bigl(\tfrac{5}{9}-c_0^2\bigr),
\end{equation}
where $c_0^2$ is the sound velocity, $\sigmatenseurxx$ (respectively $\sigmae$) is related to the relaxation time depending on the moment $\mtenseurxx$ (respectively $\me$) thanks to \eqref{eq:sigma}, where each moment is defined below. 

 \item For acoustic applications, our objective is to conserve the freedom of both viscosities even if they are sometimes taken equal. In other words, we are looking for a set of coefficients that does not link $\se$ and $\stenseurxx$.  

 \item Moreover, since the moments $\me$ and $\mtenseurxx$ are of the same order (namely 2), we have to take into account a lattice Boltzmann scheme with multiple relaxation times for acoustic applications (a Two Relaxations Times scheme leads to a link between $\mu$ and $\zeta$ because $\se$ and $\stenseurxx$ are equal). 
\end{itemize}

Finally it remains $(q-N)(N+1)-3$ parameters to be precise.

\section{Isotropy for acoustic applications in two dimensions}

The aim of this section is to investigate the property of isotropy at different orders for two schemes often used for acoustic applications (\ddqn and \ddqt) and in particular to determine all of the possible choices of coefficients that yield an isotropic scheme of order~$m$.
We first recall the definition of the \ddqn and the \ddqt schemes through their moments. 
After specifying the degrees of freedom, we explain the used method to determine all of the solutions of these huge non linear systems.
In the second part of this section, we explicate the results order after order for the \ddqn scheme and in the third part, we precise all of the obtained results for the \ddqt scheme, the intricacy of the equations increasing so much with the order that we have to restrict ourselves to the second order.

\subsection{Generalities and methodology}
In this subsection, we specify the notations for the \ddqn and the \ddqt schemes and we propose a methodology to solve the very large systems of non linear equations that appear when the isotropy is investigate at each order. These systems are written in terms of the coefficients of the matrices $E$ (describing the equilibrium states) and $S$ (describing the relaxation times) and have to be satisfy for every rotation angle $\theta$.

For both considered lattice Boltzmann schemes (\ddqn and \ddqt), three moments are conserved during the collision step: the density $\rho$ and the two coordinates of the macroscopic momentum $q_x$ and $q_y$. They are defined by
\begin{equation*}
\rho=\sum_{j=0}^{q-1}f_j,
\quad
q_x=\sum_{j=0}^{q-1}v_j^xf_j,
\quad
q_y=\sum_{j=0}^{q-1}v_j^yf_j.
\end{equation*}
Next, we introduce six moments that are not conserved during the collision step: 
the kinetic energy $\me$, the square of the kinetic energy $\mecarre$, the coordinates of the heat flux $\mfluxx$ and $\mfluxy$, and two moments of order two $\mtenseurxx$ and $\mtenseurxy$. They read
\begin{gather*}
\me=\sum_{j=0}^{q-1}\tfrac{1}{2}|v_j|^2f_j,
\quad
\mecarre=\sum_{j=0}^{q-1}\left(\tfrac{1}{2}|v_j|^2\right)^2f_j,
\\
\mfluxx=\sum_{j=0}^{q-1}\tfrac{1}{2}|v_j|^2v_j^xf_j,
\quad
\mfluxy=\sum_{j=0}^{q-1}\tfrac{1}{2}|v_j|^2v_j^yf_j,
\\
\mtenseurxx=\sum_{j=0}^{q-1} \left(\left(v_j^x\right)^2- \left(v_j^y\right)^2\right)f_j,
\quad
\mtenseurxy=\sum_{j=0}^{q-1} v_j^xv_j^yf_j.
\end{gather*}

In this paper, we consider the linear \ddqn scheme obtained after orthogonalization of these nine moments, with a full matrix $E$ describing the equilibrium states and a full diagonal matrix $S$ describing the relaxation times that are written
\begin{equation}
\label{matrices_generales_d2q9}
 E :=
\begin{pmatrix}
\eerho\lambda^2         &  \eeqx\lambda          &  \eeqy\lambda \\
\eecarrerho\lambda^4    &  \eecarreqx\lambda^3   &  \eecarreqy\lambda^3 \\
\efluxxrho\lambda^3     &  \efluxxqx\lambda^2    &  \efluxxqy\lambda^2 \\
\efluxyrho\lambda^3     &  \efluxyqx\lambda^2    &  \efluxyqy\lambda^2 \\
\etenseurxxrho\lambda^2 &  \etenseurxxqx\lambda  &  \etenseurxxqy\lambda \\
\etenseurxyrho\lambda^2 &  \etenseurxyqx\lambda  &  \etenseurxyqy\lambda 
\end{pmatrix}, \quad
 S:=\operatorname{Diag}(\se,\secarre,\sfluxx,\sfluxy,\stenseurxx,\stenseurxy).
\end{equation}

In order to define the \ddqt scheme, we have to introduce four additional moments:
the cube of the kinetic energy $\mecube$, two moments of order five $\mxecarre$ and $\myecarre$, and finally a moment of order four $\mxxe$.
\begin{gather*}
 \mecube=\sum_{0\leq j\leq q-1}\left(\tfrac{1}{2}|v_j|^2\right)^3f_j,
 \quad
 \mxecarre=\sum_{0\leq j\leq q-1} v_j^x\left(\tfrac{1}{2}|v_j|^2\right)^2 f_j,
 \\
 \myecarre=\sum_{0\leq j\leq q-1} v_j^y\left(\tfrac{1}{2}|v_j|^2\right)^2 f_j,
 \\
 \mxxe=\sum_{0\leq j\leq q-1}\left(\tfrac{1}{2}|v_j|^2\right)\left((v_j^x)^2-(v_j^y)^2\right) f_j.
\end{gather*}

In this paper, we consider the linear \ddqt scheme obtained after orthogonalization of these thirteen moments, with a full matrix $E$ describing the equilibrium states and a full diagonal matrix $S$ describing the relaxation times that are written 
 \begin{gather}
 \notag
 E :=
\begin{pmatrix}
\eerho\lambda^2\  &  \eeqx\lambda  &  \eeqy\lambda\\
\etenseurxxrho\lambda^2 &  \etenseurxxqx\lambda  &  \etenseurxxqy\lambda \\
\etenseurxyrho\lambda^2 &  \etenseurxyqx\lambda  &  \etenseurxyqy\lambda \\
\efluxxrho\lambda^3        &  \efluxxqx\lambda^2        &  \efluxxqy\lambda^2\\
\efluxyrho\lambda^3        &  \efluxyqx\lambda^2        &  \efluxyqy\lambda^2\\
\execarrerho\lambda^5        &  \execarreqx\lambda^4        &  \execarreqy\lambda^4\\
\eyecarrerho\lambda^5        &  \eyecarreqx\lambda^4        &  \eyecarreqy\lambda^4\\
\eecarrerho\lambda^4 &   \eecarreqx\lambda^3               &  \eecarreqy\lambda^3\\
\eecuberho\lambda^6 &   \eecubeqx\lambda^5                & \eecubeqy\lambda^5 \\
\exxerho\lambda^4   &   \exxeqx\lambda^3		 &   \exxeqy\lambda^3
\end{pmatrix},\\
 \label{sd2q13}
S := \operatorname{Diag}(\se,\stenseurxx,\stenseurxy,\sfluxx,\sfluxy,\sxecarre,\syecarre,\secarre,\secube,\sxxe).
\end{gather}

We then apply Definition~\ref{def:isotropy} on the \ddqn scheme in order to describe all of the sets of parameters that improve these schemes (in the sense of the isotropy). In two space dimensions, the special orthogonal transformations $r$ are the well-known rotation matrices, $N=3$ and $m\leq5$. 
We first establish the degrees of freedom for each schemes at each order.
Section~\ref{sec:ddl} gives the number of equations (this number does not depend on the scheme): 
$9(n+1)$ where $n$ is the considered order. The explicit calculation of these numbers is given in Table~\ref{table:nb_generals_equations}. 
Furthermore, there are $21$ unknowns parameters for the \ddqn scheme and $37$ for the \ddqt scheme.

\begin{table}[htbp]
\begin{center}
 \begin{tabular}{|c||c|c|c|c|}
\hline
order		&1	&2	&3	&4\\
\hline 
polynomial equations	&18	&27	&36	&45\\
\hline
\end{tabular} 
\caption{Number of polynomials eqs. for two dimensions}
\label{table:nb_generals_equations}
\end{center}
\end{table}

We remark that all of these equations are polynomial in $\cos \theta$ and $\sin\theta$, where $\theta$ is the angle that parametrizes the rotation. The study of their coefficients 
then gives other equations that only depend on the coefficients of $S$ and $E$. For instance, for the \ddqn scheme, at first order we get $18$ polynomial equations thanks to Proposition~\ref{prop:isotropy} and the formula:
$$\A{1}^{i\alpha}_j = \sum_{{1\leq \beta\leq d}}\,\sum_{{1\leq k,l\leq N}}\left(R(r)\right)^i_l\A{1}^{l,\beta}_kr^{\alpha}_{\beta}(R(r)^{-1})^k_j,\ 1\leq i,j\leq 3,\ 1\leq\alpha\leq2. $$
All of these equations shall be filed as follow:
\begin{itemize}
 \item 6 of them are null.
 \item 8 of them are of type $a\cos^3\theta+b\cos^2\theta\sin\theta+c\cos\theta+d\sin\theta+e=0$. Since the functions $\theta\mapsto\cos^3\theta$, $\theta\mapsto\cos^2\theta\sin\theta$, $\theta\mapsto\cos\theta$, $\theta\mapsto\sin\theta$, and $\theta\mapsto1$ are free,
each of these equations implies 5 additional equations: $a=b=c=d=e=0$. 
 \item 4 of them are of type $a\cos\theta\sin\theta+b\sin^2\theta=0$. Since the functions $\theta\mapsto\cos\theta\sin\theta$ and $\theta\mapsto\sin^2\theta$ are free, 
 each of these equations implies 2 additional equations: $a=b=0$. 
\end{itemize}

The isotropy for the \ddqn scheme at first order is characterized by 48 equations:
8 coefficients of the matrix $E$ are then fixed and it remains $21{-}8=13$ parameters for the isotropy at second order.

Following this methodology order after order,
we are able to list in Table~\ref{table:d2q9} and \ref{table:d2q13} for each order the number of equations that we have to take into account (namely, equations that do not depend on the angle $\theta$) and the number of parameters that we have to precise. 

\begin{table}[htbp]
\begin{center}
\begin{minipage}{0.45\textwidth}
\begin{center}
\begin{tabular}{|c||c|c|c|c|}
\hline
order		&1	&2	&3	&4\\
\hline 
equations	&48	&78	&116	&169\\
\hline
parameters	&21	&13	&8	&6\\
\hline
\end{tabular} 
\caption{Number of eqs. for \ddqn \label{table:d2q9}}
\end{center}
\end{minipage}
\hfill
\begin{minipage}{0.45\textwidth}
\begin{center}
\begin{tabular}{|c||c|c|c|}
\hline
order		&1	&2	&3\\
\hline 
equations	&48	&78	&148\\
\hline
parameters	&37	&29	&23\\
\hline
\end{tabular} 
\caption{Number of eqs. for \ddqt \label{table:d2q13}}
\end{center}
\end{minipage}
\end{center}
\end{table}

\begin{rmk}
Both numbers of equations for the \ddqn and \ddqt schemes are the same 
(the order $n$ being fixed)
because the tensors $\A{n}$ have the same size. 
However, the coefficients of these polynomial equations (in $\sin(\theta)$ and $\cos(\theta)$)  depend on the geometry of the scheme and as a result the number of equations we have to solve is not the same for the two schemes. 
\end{rmk}

The methodology to investigate the isotropy at order $n$ is then resume in three steps.
\begin{enumerate}
 \item compute the equivalent equations of order $n$ (they could be obtained using a formal code \cite{DL11,Dpreparation}) of the lattice Boltzmann scheme described by the matrices $E$ and $S$ taking into account the relations obtained at previous orders.
 \item write the whole of the equations given by Proposition~\ref{prop:isotropy}, the number of equations being given in Table~\ref{table:d2q9} and Table~\ref{table:d2q13}.
 \item precise the relations on the coefficients that have to be satisfied in order to have the desired order of isotropy.
\end{enumerate}

Finally, we introduce a notation that yields to identify the equations: 
\begin{notation}[\eqq{eqa1...anj}]
A natural way to identify the equations consists in specify the number of the conservation equation and the corresponding coefficient of the tensor $\A{N}$, $1\leq N\leq m$. Then, we denote by \eqq{eqia1...anj} the equation given thanks to $\A{n}^{i,a_1,\cdots,a_n}_j$. 
We then denote the coefficient of $\cos^k(\theta)\sin^l(\theta)$ in the equation \eqq{eqa1...anj} by \eqq{eqia...anjcosksinl}.
\end{notation}

\subsection{Results for the \ddqn scheme}
\label{sec:d2q9}

In this Subsection, the main result on the \ddqn scheme is given in Proposition~\ref{prop:d2q9}. The proof is then subdivided into five Lemmas by proceeding order after order.

\begin{prop}
\label{prop:d2q9}
 Let $L_5(r)$ be the lack of isotropy for the \ddqn scheme at fifth order for the rotation $r$, then we get the following propositions.
\begin{itemize}
 \item The scheme is isotropic at first order, that is $L_5(r)=\grandO({\Delta t}^2),\qq r\in\rotation[2]
$, iff $\eeqx$, $\eeqy$, $\etenseurxxrho$, $\etenseurxxqx$, $\etenseurxxqy$, $\etenseurxyrho$, $\etenseurxyqx$, and $\etenseurxyqy$ are zero.
 \item The scheme is isotropic at second order, that is $L_5(r)=\grandO({\Delta t}^3),\qq r\in\rotation[2]$,
iff it is isotropic at first order,
$\efluxxrho$, $\efluxxqy$, $\efluxyrho$, $\efluxyqx$ are zero, and $\efluxxqx=\efluxyqy=(\sigmatenseurxx-4\sigmatenseurxy)/(2\sigmatenseurxy+\sigmatenseurxx)$.
 \item The scheme is isotropic at third order, that is $L_5(r)=\grandO({\Delta t}^4),\qq r\in\rotation[2]$,
iff the three following properties are satisfied:
\begin{itemize}
\item it is isotropic at second order,
\item $\sigmatenseurxx=\sigmatenseurxy$ and $\efluxxqx=-1$,
\item at least one of these three properties is satisfied
\begin{itemize}
\item $2\eecarrerho+4+3\eerho=0$, $\eecarreqx=\eecarreqy=0$,
\item $\sigmafluxx=\sigmafluxy=1/(12\sigmatenseurxx)$, $\eecarreqx=\eecarreqy=0$,
\item $\sigmafluxx=\sigmafluxy=1/(12\sigmatenseurxx)$,  $\sigmatenseurxx=\sigmae$.
\end{itemize}
\end{itemize}
 \item The scheme is isotropic at fourth order, that is $L_5(r)=\grandO({\Delta t}^5),\qq r\in\rotation[2]$,
iff the three following properties are satisfied:
\begin{itemize}
\item it is isotropic at third order,
\item $2\eecarrerho+4+3\eerho=0$, $\sigmae=\sigmatenseurxx$, $\eecarreqx=\eecarreqy=0$, $\sigmafluxx=\sigmafluxy=1/(6\sigmatenseurxx)$,
\item $2+3\eerho=0$ or $\sigmaecarre=\sigmatenseurxx$. 
\end{itemize}
 \item The scheme is never isotropic at fifth order, that is to say the property $L_5(r)=\grandO({\Delta t}^6),\qq r\in\rotation[2]$, is never true. 
\end{itemize}
\end{prop}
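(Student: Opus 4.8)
The plan is to establish the five bullet points sequentially, as an increasing chain of necessary-and-sufficient conditions, each one assuming the isotropy already forced at the previous order; I would organize this as five lemmas, one per order $n\in\{1,2,3,4,5\}$. In the $n$-th lemma I carry out the three steps of the methodology of Section~\ref{sec:d2q9}. First, compute the tensor $\A{n}$ of the equivalent equations \eqref{equivalent_equations} as an explicit function of the coefficients of $E$ and $S$ still free, using the formal Taylor-expansion algorithm of \cite{DL11,Dpreparation} and substituting the relations already imposed at orders below $n$. Second, write the fixed-point condition $\Phi_n(r)(\A{n})=\A{n}$ of Proposition~\ref{prop:isotropy} for a general rotation $r=r(\theta)$. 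Third, expand each component in the independent trigonometric monomials $\cos^k\theta\sin^l\theta$ and equate coefficients, producing the purely algebraic system (whose cardinality $48,78,116,169$ is recorded in Table~\ref{table:d2q9}) that must then be solved.

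The structural observation that organizes the bookkeeping is that $\Phi_n(r)$ is precisely the natural $\rotation[2]$-action on a tensor whose $n$ space indices transform by $r$ and whose two moment indices transform by $R(r)$. Since $R(r)$ fixes the density index (a scalar) and rotates the momentum pair $(\qx,\qy)$ (a vector), each block of $\A{n}$ is a rotation tensor of rank $n$, $n+1$ or $n+2$, and the fixed-point condition simply demands that it be an \emph{invariant} tensor of $\rotation[2]$. Invariant tensors are spanned by products of the Kronecker and Levi-Civita symbols, so for each block this both dictates the admissible shape and forces the non-matching components to vanish; this predicts, and lets me verify cleanly, the vanishing conditions. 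At first order it reduces the $48$ equations to the eight vanishing coefficients $\eeqx,\eeqy,\etenseurxxrho,\etenseurxxqx,\etenseurxxqy,\etenseurxyrho,\etenseurxyqx,\etenseurxyqy$; at second order, after substitution, the system forces $\efluxxrho=\efluxxqy=\efluxyrho=\efluxyqx=0$ and the single rational constraint $\efluxxqx=\efluxyqy=(\sigmatenseurxx-4\sigmatenseurxy)/(2\sigmatenseurxy+\sigmatenseurxx)$, read off from the one nontrivial momentum-to-momentum block.

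Orders three and four are where the system becomes genuinely nonlinear in the relaxation parameters, so the solution set splits into cases and the statement acquires its disjunctive form. At third order I expect $\sigmatenseurxx=\sigmatenseurxy$ and $\efluxxqx=-1$ to be forced first (the former collapses the second-order ratio to $-1$), after which the remaining equations factor into three alternatives; the real work is to show that the union of the three listed branches is exactly the zero set, with none lost and none redundant. At fourth order the same discipline applies; the key reconciliation is that the required $\sigmafluxx=\sigmafluxy=1/(6\sigmatenseurxx)$ is incompatible (for $\sigmatenseurxx>0$) with the value $1/(12\sigmatenseurxx)$ of the last two third-order branches, which forces the surviving configuration to come from the first third-order branch, supplemented by $\sigmae=\sigmatenseurxx$ and the final dichotomy $2+3\eerho=0$ or $\sigmaecarre=\sigmatenseurxx$.

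The last bullet, non-isotropy at fifth order, is proved by contradiction: assuming all fourth-order relations, I compute $\A{5}$, impose $\Phi_5(r)(\A{5})=\A{5}$, extract the trigonometric-coefficient equations of the analogous fifth-order system, and show it is inconsistent with the accumulated constraints, for instance by exhibiting one coefficient equation that cannot vanish once the positivity $\sigma_k>0$ of \eqref{eq:sigma} and the fourth-order relations hold. The main obstacle throughout is computational mass and certifiability: the tensors $\A{4}$ and $\A{5}$ are large, the fixed-point systems are high-degree and nonlinear in the $\sigma$'s, and the case analyses must be shown to be exhaustive. I would therefore use the formal code of \cite{DL11,Dpreparation} to generate $\A{n}$ and the coefficient equations, but discharge the final solving and the completeness of the branchings by hand or by a verified Gröbner-basis computation, so that the inconsistency at order five and the exhaustiveness of the lists at orders three and four are certified rather than merely observed.
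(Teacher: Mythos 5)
Your proposal is correct and takes essentially the same route as the paper: five lemmas, one per order, each computing $\A{n}$ formally, imposing the fixed-point condition of Proposition~\ref{prop:isotropy}, expanding every component in the monomials $\cos^k\theta\sin^l\theta$, solving the resulting algebraic system with the positivity of the $\sigma$'s pruning the case tree (three surviving branches at third order, a dichotomy at fourth), and concluding at fifth order by exhibiting a single coefficient equation (the paper uses \eqq{eq1111112}, of type $c\cos\theta\sin\theta$ with $c\neq0$ independent of the parameters) that cannot vanish for all rotations. The only substantive deviation is your anticipated mechanism at fourth order: the paper eliminates the branches \ppn{7} and \ppn{8} because equation \eqq{eq111111cos2sin2} forces $\sigmatenseurxx=0$ there, contradicting positivity, rather than by your $1/(12\sigmatenseurxx)$-versus-$1/(6\sigmatenseurxx)$ clash --- a detail that your planned computational execution and certified case analysis would uncover and repair anyway.
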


\begin{rmk}
Some of these properties that insure the isotropy of the \ddqn scheme are well-known and often used.  
In particular, the parameters that are taken null in order to obtain the isotropy at first and second orders are also null for isotropic reasons considering the kinetic solution at equilibrium in the continuous environment. However, the results on the third and fourth orders are more surprising even if some usual \ddqn schemes can be seen as peculiar cases of those we propose.
\end{rmk}

The proof of Proposition~\ref{prop:d2q9} is detailed below thanks to five Lemmas: one for each considered order. The followed methodology is given in the previous subsection. 
We have to remark that we continually use the fact that the coefficients $\sigmae$, $\sigmaecarre$, $\sigmafluxx$, $\sigmafluxy$, $\sigmatenseurxx$, and $\sigmatenseurxy$ are positive.

\subsubsection{First order}

The isotropy at first order is described in the following Lemma.
\begin{lem}
 \label{lem:d2q9o1}
The \ddqn scheme is isotropic at first order if, and only if the properties \ppns{1}{2} are satisfied.
\begin{prpn}{1}
At equilibrium, the energy is proportional to the density (the proportionality factor is relative to the sound velocity).
\end{prpn}
\begin{prpn}{2}
Both moments $\mtenseurxx$ and $\mtenseurxy$ vanish at equilibrium.
\end{prpn}
Properties \ppns{1}{2} involve the following structure for the matrix $E$ (there is no constraint on the matrix $S$ at first order):
\begin{equation*}
E =
\begin{pmatrix}
\eerho\lambda^2         &  0                     &  0 \\
\eecarrerho\lambda^4    &  \eecarreqx\lambda^3   &  \eecarreqy\lambda^3 \\
\efluxxrho\lambda^3     &  \efluxxqx\lambda^2    &  \efluxxqy\lambda^2 \\
\efluxyrho\lambda^3     &  \efluxyqx\lambda^2    &  \efluxyqy\lambda^2 \\
0                       &  0                     & 0 \\
0                       &  0                     &  0
\end{pmatrix}.
\end{equation*}
\end{lem}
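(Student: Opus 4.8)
The plan is to specialize the general isotropy criterion of Proposition~\ref{prop:isotropy} to the first-order tensor $\A{1}$ of the \ddqn scheme and then to solve the resulting system explicitly. By Definition~\ref{def:isotropy}, isotropy at first order is equivalent to $\Phi_1(r)(\A{1})=\A{1}$ for every $r\in\rotation[2]$, so everything reduces to understanding the single tensor $\A{1}$ and the action of the group operation $\Phi_1$ on it.

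First I would compute $\A{1}$ explicitly as a function of the coefficients of the equilibrium matrix $E$. Since the first-order equivalent equations \eqref{equivalent_equations} are the Euler-level conservation laws obtained at leading order in the Taylor expansion (via the algorithm of \cite{D05,DL11,Dpreparation}), the tensor $\A{1}$ depends only on $E$ and not on the relaxation matrix $S$; this is precisely why the Lemma asserts no constraint on $S$ at this order. Concretely, $\A{1}$ has $N^2d=18$ components $\A{1}^{i,\alpha}_j$ ($1\le i,j\le 3$, $1\le\alpha\le 2$), each a linear combination of entries of $E$. The mass line ($i=1$) reproduces the exact divergence $\partial_x q_x+\partial_y q_y$ and carries no free coefficient, whereas the two momentum lines carry the Euler flux built from the equilibria of the second-order moments: the isotropic pressure part is governed by the energy row $\eerho,\eeqx,\eeqy$, and the traceless-stress part by the six entries $\etenseurxxrho,\etenseurxxqx,\etenseurxxqy,\etenseurxyrho,\etenseurxyqx,\etenseurxyqy$.

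Next I would substitute the rotation parametrization $r=r(\theta)$ together with $R(r)=\left(\begin{smallmatrix}1&0\\0&r\end{smallmatrix}\right)$ into the defining formula \eqref{def:Phi_n} and write the $18$ scalar identities $(\Phi_1(r)(\A{1}))^{i,\alpha}_j=\A{1}^{i,\alpha}_j$. Each is a trigonometric polynomial in $\cos\theta$ and $\sin\theta$ that must hold for all $\theta$. Exploiting the linear independence of the monomials $1,\cos\theta,\sin\theta,\cos\theta\sin\theta,\sin^2\theta,\cos^2\theta\sin\theta,\cos^3\theta$, the $18$ identities decouple, exactly as in the count preceding the statement, into $48$ scalar equations on the entries of $E$ ($6$ identities are void, $8$ split into $5$ equations and $4$ into $2$). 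Since $\A{1}$ is linear in $E$, this is a homogeneous linear system, and solving it is pure linear algebra.

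The step demanding the most care is the bookkeeping underlying the previous two paragraphs: correctly reading off the $18$ components of $\A{1}$ from the equivalent equations and tracking onto which trigonometric monomial each entry of $E$ is mapped by $\Phi_1$. Once the $48$ equations are assembled the solution is forced: the eight coefficients $\eeqx,\eeqy,\etenseurxxrho,\etenseurxxqx,\etenseurxxqy,\etenseurxyrho,\etenseurxyqx,\etenseurxyqy$ must all vanish while the remaining thirteen entries of $E$ stay free, which is exactly the matrix structure displayed in the Lemma. The vanishing of $\eeqx,\eeqy$ means the energy equilibrium is proportional to $\rho$, i.e.\ property \ppn{1}, and the vanishing of the six remaining coefficients means the moments $\mtenseurxx$ and $\mtenseurxy$ vanish at equilibrium, i.e.\ property \ppn{2}. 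For the converse I would substitute this reduced $E$ back into $\A{1}$ and check directly that $\Phi_1(r)$ fixes it for every $r$, so that \ppns{1}{2} are sufficient as well as necessary.
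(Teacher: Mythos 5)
Your proposal is correct and takes essentially the same route as the paper: specialize Proposition~\ref{prop:isotropy} to $\A{1}$ (which indeed depends only on $E$, whence no constraint on $S$), expand the $18$ identities in the free trigonometric monomials to obtain the $48$ linear equations on the entries of $E$, solve to force the eight coefficients $\eeqx$, $\eeqy$, $\etenseurxxrho$, $\etenseurxxqx$, $\etenseurxxqy$, $\etenseurxyrho$, $\etenseurxyqx$, $\etenseurxyqy$ to vanish, and check sufficiency. The paper's proof is just a more economical instantiation of this plan, naming the specific coefficient-equations (\eqq{eq211cos1sin1}, \eqq{eq211cos0sin2}, the four \eqq{eq222...} equations, then \eqq{eq212cos0sin0} and \eqq{eq322cos0sin1}) that successively kill each group of parameters.
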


\begin{rmk}
This result gives a rigorously justification for usual physical assumptions. 
\end{rmk}

\begin{proof}
Considering eqs. \eqq{eq211cos1sin1} and  \eqq{eq211cos0sin2} yields to 
$\etenseurxxrho=\etenseurxyrho=0$. Then the system of four equations 
\begin{equation*}
 \left\{\begin{array}{rcl}
	\eqq{eq222cos2sin1}&=&0,\\
	\eqq{eq222cos1sin0} &=&0,\\
	\eqq{eq222cos0sin1}&=&0,\\
	\eqq{eq222cos0sin0}&=&0,
        \end{array}\right.
\end{equation*}
gives that the four unknowns $\etenseurxxqx$, $\etenseurxxqy$, $\etenseurxyqx$ and $\etenseurxyqy$ vanish.

To solve all of the remained equations, it is sufficient to consider the equations \eqq{eq212cos0sin0} and \eqq{eq322cos0sin1} in the unknowns $\eeqx$ and $\eeqy$: more precisely the equilibrium energy is characterized by $\eeqx=\eeqy=0$.  
\end{proof}

\subsubsection{Second order}

For the \ddqn scheme, the isotropy at second order can be characterized by the following Lemma:

\begin{lem}
\label{lem:d2q9o2}
The \ddqn scheme is isotropic at second order if and only if the properties \ppns{1}{4} are satisfied.
\begin{prpn}{3}
 At equilibrium, the heat flux is proportional to the momentum, that is $\mfluxx=\efluxxqx\lambda^2\qx$ and $\mfluxy=\efluxxqx\lambda^2\qy$.
\end{prpn}
\begin{prpn}{4}
 The proportionality factor $\efluxxqx$ is related to the relaxation times by the relation $\efluxxqx=-(4\sigma_\tenseurxy-\sigma_\tenseurxx)/(\sigmatenseurxx+2\sigmatenseurxy).$
\end{prpn}
Properties \ppns{1}{4} involve the following structure for the matrix $E$ (there is no constraint on the matrix $S$ at second order):
\begin{equation*}
E = \begin{pmatrix}
\eerho\lambda^2      &  0                    &  0 \\
\eecarrerho\lambda^4 &  \eecarreqx\lambda^3  &  \eecarreqy\lambda^3 \\
0                    &  c\lambda^2           &  0 \\
0                    &  0                    &  c\lambda^2 \\
0                    &  0                    & 0 \\
0                    &  0                    &  0
\end{pmatrix},
\end{equation*}
with $c=-(4\sigmatenseurxy-\sigmatenseurxx)/(\sigmatenseurxx+2\sigmatenseurxy)$. 
\end{lem}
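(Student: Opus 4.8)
The plan is to proceed exactly as in the first-order Lemma, now working with the second-order tensor $\A{2}$ and imposing the fixed-point condition $\Phi_2(r)(\A{2})=\A{2}$ from Proposition~\ref{prop:isotropy} on top of the first-order constraints already secured by Lemma~\ref{lem:d2q9o1}. Since isotropy at second order means isotropy at first order together with $\Phi_2(r)(\A{2})=\A{2}$, I would begin by substituting into the equivalent equations the structure of $E$ already obtained at first order (so that $\eeqx=\eeqy=\etenseurxxrho=\etenseurxxqx=\etenseurxxqy=\etenseurxyrho=\etenseurxyqx=\etenseurxyqy=0$). Table~\ref{table:d2q9} tells us we must impose the $78$ equations arising at second order, of which the $48$ first-order ones are already satisfied, leaving the genuinely new constraints coming from the coefficients of the trigonometric monomials in the $\Phi_2$ fixed-point relations.

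The concrete steps are the three of the stated methodology. First, I would compute the equivalent equations of order $2$ for the scheme described by $E$ (in its first-order-reduced form) and the full diagonal $S$; the tensor $\A{2}$ is of order $4$ with $N^2\binom{d+1}{2}=9\cdot3=27$ independent components per the degrees-of-freedom count. Second, I would write out the $27$ polynomial-in-$(\cos\theta,\sin\theta)$ equations produced by $\Phi_2(r)(\A{2})-\A{2}=0$, and, using the linear independence of the trigonometric monomials exactly as in the first-order analysis (the functions $\cos^k\theta\sin^l\theta$ being free), extract the angle-independent equations on the coefficients of $E$ and $S$. Third, I would solve this system. I expect the vanishing conditions $\efluxxrho=\efluxyrho=0$ to come from the coefficients of the conservation equations for $\qx$ and $\qy$ that must kill the density gradient of the heat-flux moments, and $\efluxxqy=\efluxyqx=0$ together with the symmetry $\efluxxqx=\efluxyqy$ to come from requiring the flux tensor to respect the $SO_2(\mathbb{R})$-equivariance (Property~\ppn{3}: the heat flux must be proportional to the momentum with the same scalar factor in both coordinates). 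The final relation of Property~\ppn{4}, $\efluxxqx=-(4\sigmatenseurxy-\sigmatenseurxx)/(\sigmatenseurxx+2\sigmatenseurxy)$, I expect to emerge from an equation that mixes $E$-coefficients with the relaxation parameters $\sigmatenseurxx,\sigmatenseurxy$ entering $\A{2}$ through $S$; here the positivity $\sigmatenseurxx,\sigmatenseurxy>0$ guarantees the denominator $\sigmatenseurxx+2\sigmatenseurxy$ is nonzero, so the ratio is well defined.

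The main obstacle will be step three: the second-order equivalent equations depend quadratically on $E$ and on the relaxation coefficients, so the $27$ raw relations, once split by trigonometric monomial, form a large nonlinear system, and the real work is to identify which combinations are independent and to disentangle the flux coefficients from the relaxation-time combinations without prematurely fixing parameters that should remain free. In particular I must be careful to track that no spurious constraint couples $\se$ and $\stenseurxx$ (the acoustic requirement that shear and bulk viscosities stay independent), so I would verify at the end that the surviving degrees of freedom agree with the count $21-8=13$ announced for entry into second order and that exactly the five listed coefficients get fixed, leaving the residual parameters $\eerho,\eecarrerho,\eecarreqx,\eecarreqy$ and the relaxation data untouched. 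Conversely, once the forward direction is done, the "if" direction is immediate: substituting the reduced $E$ and the relation for $c=\efluxxqx$ back into $\Phi_2(r)(\A{2})-\A{2}$ and checking it vanishes identically in $\theta$, which is a direct verification rather than a derivation.
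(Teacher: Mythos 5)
Your plan follows exactly the paper's own route: assume the first-order structure from Lemma~\ref{lem:d2q9o1}, impose the fixed-point condition $\Phi_2(r)(\A{2})=\A{2}$ of Proposition~\ref{prop:isotropy}, split the resulting polynomial identities in $\cos\theta,\sin\theta$ by linear independence of the monomials, solve for $\efluxxrho=\efluxyrho=0$, $\efluxxqx=\efluxyqy$, $\efluxyqx=-\efluxxqy=0$ and the relation \ppn{4}, then check sufficiency by back-substitution --- which is precisely what the paper does, merely citing the specific coefficient equations (\eqq{eq2221cos0sin0}, \eqq{eq3111cos0sin0}, \eqq{eq2222cos0sin2}, \eqq{eq3112cos0sin2}, \eqq{eq3113cos0sin2}, \eqq{eq2233cos2sin2}) that yield each conclusion. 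The proposal is correct in substance and identical in method; the only slips are cosmetic (the $78$ equations in Table~\ref{table:d2q9} are the new order-$2$ ones, not a superset of the $48$ first-order ones), and they do not affect the argument.
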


\begin{proof}
Considering first eq. \eqq{eq2221} and more precisely the system
\begin{equation*}
 \left\{\begin{array}{rcl}
  \eqq{eq2221cos0sin0}&=&0,\\
  \eqq{eq3111cos0sin0}&=&0,
 \end{array}\right.
\end{equation*}
we obtain that $\efluxxrho$, $\efluxyrho$ vanish. Then the system
\begin{equation*}
 \left\{\begin{array}{rcl}
         \eqq{eq2222cos0sin2}&=&0,\\
	\eqq{eq3112cos0sin2}&=&0,
        \end{array}\right.
\end{equation*}
implies both results $\efluxxqx=\efluxyqy$ and  $\efluxyqx=-\efluxxqy$. Using eq.~\eqq{eq3113cos0sin2} yields to $\efluxxqy=0$ and eq.~\eqq{eq2233cos2sin2} gives the characterization on $\efluxxqx$. Finally, we prove that the previous relations are sufficient to impose the isotropy at second order.
\end{proof}

\begin{rmk}
The relation $\efluxxqx=-(4\sigmatenseurxy-\sigmatenseurxx)/(\sigmatenseurxx+2\sigmatenseurxy)$ is exactly the relation (41) of \cite{LL00} except that the roles of the relaxation times $\sigmatenseurxx$ and $\sigmatenseurxy$ have to be exchanged. 
\end{rmk}
\subsubsection{Third order}

For the \ddqn scheme, the isotropy at third order can be characterized by the following Lemma:

\begin{lem}
\label{lem:d2q9o3}
The \ddqn scheme is isotropic at third order if and only if the whole of the properties \ppns{1}{5} and one of the properties \ppns{6}{8} are satisfied.
\begin{prpn}{5}
 The relaxation times $\sigma_\tenseurxx$ and $\sigma_\tenseurxy$ (relating to second order moments) are the same, so that $c=-1$. 
\end{prpn}
\begin{prpn}{6}
At equilibrium, the square of the energy is proportional to the density: that is $\eecarreqx = 0$ and $\eecarreqy= 0$, and is linked with the energy through the relation $2\eecarrerho+4+3\eerho=0$. 
\end{prpn}
\begin{prpn}{7}
At equilibrium, the square of the energy is proportional to the density: that is $\eecarreqx = 0$ and $\eecarreqy= 0$, and both relaxation times related to the heat flux satisfy $\sigmafluxx =\sigmafluxy =1/(12\sigmatenseurxx)$.
\end{prpn}
\begin{prpn}{8}
Both relaxation times related to the heat flux satisfy $\sigmafluxx =\sigmafluxy = 1/(12\sigmatenseurxx)$, and both viscosities are linked by $\sigmae =\sigmatenseurxx$. 
\end{prpn}
Properties \ppns{1}{5}$+$\ppn{6} involve the following structure of the matrices $E$ and $S$:
\begin{eqnarray*}
E =\begin{pmatrix}
\eerho\lambda^2      &  0           &  0 \\
\dfrac{-4-3\eerho}{2}\lambda^4 &  0           &  0 \\
0                    &  -\lambda^2  &  0 \\
0                    &  0           &  -\lambda^2 \\
0                    &  0           & 0 \\
0                    &  0           &  0
\end{pmatrix}, \\
S = \operatorname{Diag}\trans{(\se,\secarre,\sfluxx,\sfluxy,\stenseurxx,\stenseurxx)}.
\end{eqnarray*}
Properties \ppns{1}{5}$+$\ppn{7} involve the following structure of the matrices $E$ and $S$:
\begin{eqnarray*}
E = \begin{pmatrix}
\eerho\lambda^2      &  0           &  0 \\
\eecarrerho\lambda^4 &  0           &  0 \\
0                    &  -\lambda^2  &  0 \\
0                    &  0           &  -\lambda^2 \\
0                    &  0           & 0 \\
0                    &  0           &  0
\end{pmatrix}, \\
S = \operatorname{Diag}\trans{\left(\se,\secarre,3\dfrac{2-\stenseurxx}{3-\stenseurxx},3\dfrac{2-\stenseurxx}{3-\stenseurxx},\stenseurxx,\stenseurxx\right)}.
\end{eqnarray*}
Properties \ppns{1}{5}$+$\ppn{8} involve the following structure of the matrices $E$ and $S$:
\begin{gather*}
E = \begin{pmatrix}
\eerho\lambda^2      &  0           &  0 \\
\eecarrerho\lambda^4 &  \eecarreqx\lambda^3           &   \eecarreqy\lambda^3\\
0                    &  -\lambda^2  &  0 \\
0                    &  0           &  -\lambda^2 \\
0                    &  0           & 0 \\
0                    &  0           &  0
\end{pmatrix},\\
S = \operatorname{Diag}\trans{\left(\stenseurxx,\secarre,3\dfrac{2-\stenseurxx}{3-\stenseurxx},3\dfrac{2-\stenseurxx}{3-\stenseurxx},\stenseurxx,\stenseurxx\right)}.
\end{gather*}
\end{lem}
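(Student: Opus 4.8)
The plan is to follow the three-step methodology announced above, starting from the second-order normal form. By Lemma~\ref{lem:d2q9o2} I may assume properties \ppns{1}{4}, so that $E$ already has the reduced shape displayed there, with a single surviving heat-flux entry $c=-(4\sigmatenseurxy-\sigmatenseurxx)/(\sigmatenseurxx+2\sigmatenseurxy)$, while $S$ is still unconstrained. First I would feed this normal form into the formal algorithm of \cite{DL11,Dpreparation} to produce the third-order tensor $\A{3}$, whose entries are rational functions of the remaining unknowns $\eerho$, $\eecarrerho$, $\eecarreqx$, $\eecarreqy$ and of the $\sigma$'s. Then I would impose the fixed-point condition $\Phi_3(r)(\A{3})=\A{3}$ of Proposition~\ref{prop:isotropy} for every rotation $r$, expand each component in $\cos\theta$ and $\sin\theta$, and collect the coefficients of the independent trigonometric monomials to obtain the scalar system (the 116 third-order equations recorded in Table~\ref{table:d2q9}); since \ppns{1}{4} already make $\A{1}$ and $\A{2}$ fixed points, only the new $\A{3}$-equations carry information.

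The heart of the argument is to extract from this system, in order, the two assertions of the Lemma. First I expect a small subfamily of equations---isolated through the labelling notation \eqq{eqa1...anj}---to force $\sigmatenseurxx=\sigmatenseurxy$, which is precisely property \ppn{5} and makes $c=-1$, i.e. $\efluxxqx=\efluxyqy=-1$; here positivity of the relaxation coefficients is used to discard the spurious factor $\sigmatenseurxx+2\sigmatenseurxy=0$. Substituting $c=-1$ and simplifying, the decisive step is that the residual constraints should organise themselves into a single factored relation whose vanishing splits into the three alternatives \ppns{6}{8}: each factor, again using $\sigma>0$ to reject non-physical branches, must match exactly one of the displayed pairs of conditions on $(\eecarrerho,\eecarreqx,\eecarreqy)$ and on $(\sigmafluxx,\sigmafluxy,\sigmae,\sigmatenseurxx)$. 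Recognising this factorisation, and verifying that no fourth branch survives, is the main obstacle, since it demands careful bookkeeping across many rational expressions and repeated appeal to the positivity of $\sigmae$, $\sigmaecarre$, $\sigmafluxx$, $\sigmafluxy$, $\sigmatenseurxx$, $\sigmatenseurxy$ to prune parasitic solutions.

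For the converse I would establish sufficiency by substituting each of the three explicit $(E,S)$ normal forms listed in the statement back into the full third-order system and checking that all $116$ equations hold identically; this is the routine direction, but it must be carried out for all three cases to confirm that \ppns{1}{5} together with any one of \ppns{6}{8} indeed yield isotropy at third order. The entire computation is best handled with formal calculus, the only genuinely delicate points being the factorisation that produces the disjunction \ppns{6}{8} and the systematic use of the sign conditions on the $\sigma$'s.
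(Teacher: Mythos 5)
Your proposal is correct and follows essentially the same route as the paper's proof: assume the second-order normal form from Lemma~\ref{lem:d2q9o2}, generate the third-order isotropy conditions as polynomials in $\cos\theta$ and $\sin\theta$, extract \ppn{5} first, prune non-physical branches via positivity of the $\sigma$'s, and confirm sufficiency of each surviving case by substitution. The one structural difference is that the paper does not obtain the disjunction \ppns{6}{8} from a single factored relation; instead, specific coefficient equations (such as \eqq{eq32223cos1sin0}, \eqq{eq22221cos1sin1}, \eqq{eq21112cos0sin2}) each factor into a two-way dichotomy among seven elementary relations, and the resulting $2^8$ a priori combinations collapse, by consistency, to exactly the three cases of the Lemma.
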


\begin{proof}
We first consider eq. \eqq{eq11112cos2sin2} and we establish property \ppn{6}. Then, multiple choices appear considering eqs. \eqq{eq32223}, \eqq{eq22221} and \eqq{eq21112}. In order to give more details about these choices we have to define seven relations:

\begin{center}
\begin{tabular}{p{0.8cm}>{$}p{4.5cm}<{$}p{0.8cm}>{$}p{4.5cm}<{$}}
\ppn{a}& \eecarreqy = 0, & \ppn{d}&-1+6\sigmatenseurxx(\sigmafluxx+\sigmafluxy)=0, \\
\ppn{b}& \sigmae = \sigmatenseurxx, & \ppn{e}& 4+2\eecarrerho+3\eerho=0,\\
\ppn{c}& \eecarreqx=0, & \ppn{f}& -1+4\sigmatenseurxx(2\sigmafluxx+\sigmafluxy) = 0,\\
& & \ppn{g}& \sigmafluxx =\sigmafluxy.
\end{tabular}
\end{center}

The investigation of these equations involves some of these seven relations. More precisely, we have:\\
- eq. \eqq{eq32223cos1sin0} yields to a dichotomy between \ppn{a} and \ppn{c},\\
- eq. \eqq{eq32223cos0sin1} yields to a dichotomy between \ppn{b} and \ppn{c},\\
- eq. \eqq{eq32223cos5sin0} yields to a dichotomy between \ppn{a} and \ppn{d},\\
- eq. \eqq{eq32223cos4sin1} yields to a dichotomy between \ppn{c} and \ppn{d},\\
- eq. \eqq{eq22221cos1sin1} yields to a dichotomy between \ppn{e} and \ppn{f},\\
- eq. \eqq{eq21112cos0sin2} yields to a dichotomy between \ppn{e} and \ppn{g}.

That gives \textit{a priori} $2^8=256$ possibilities. However it is straightforward that only three cases remain: \\
- properties \ppn{a}, \ppn{c} and \ppn{e} are true,\\
- properties \ppn{a}, \ppn{c}, \ppn{f} and \ppn{g} are true,\\
- properties \ppn{b}, \ppn{d} and \ppn{f} are true. 

Finally, each of these three cases solves all of the reminded equations and we get isotropy at the third order. 
\end{proof}
\begin{rmk}\ \\
- The properties \ppns{5}{6} are already used in the literature (the equality $2\eecarrerho+4+3\eerho=0$ is a generalization of the classical choice $\eerho=-2$ and $\eecarrerho=1$), so it is really interesting to obtain a justification of this choice by the study of the isotropy. \\
- The equality $\sfluxx = 3(2-\stenseurxx)/(3-\stenseurxx)$ is also proposed in \cite{LL00}. \\
- To our knowledge, the sets of parameters given by properties \ppns{1}{5}-\ppn{7} and \ppns{1}{5}-\ppn{8} are new though the second one is less interesting for acoustic applications because of the link between both viscosities. 
\end{rmk}

\subsubsection{Fourth order}

For the \ddqn scheme, the isotropy at fourth order can be characterized by the following Lemma:

\begin{lem}
\label{lem:d2q9o4}
The \ddqn scheme is isotropic at fourth order if and only if the whole of the properties \ppns{1}{6}, \ppns{9}{11} and one of the two properties  \ppns{12}{13} are satisfied.
\begin{prpn}{9}
 The relaxations times related to odd moments satisfy $\sigma_\fluxx = \sigma_\fluxy =1/(6\sigma_\tenseurxx)$.
\end{prpn}
\begin{prpn}{10}
At equilibrium, the square of the kinetic energy is proportional to the density $\ecarre = \eecarrerho\rho\lambda^4$. 
\end{prpn}
\begin{prpn}{11}
At equilibrium, the kinetic energy and its square are linked by $ 2\eecarrerho+4+3\eerho=0$.
\end{prpn}
\begin{prpn}{12}
 The relaxations times related to even moments satisfy  $\sigmae=\sigmaecarre=\sigma_\tenseurxx$.
\end{prpn}
\begin{prpn}{13}
 The sound velocity is imposed through the equality $2+3\eerho=0$. 
\end{prpn}
These conditions involve the following structure for the matrices $E$ and $S$:
\begin{eqnarray*}
E = \begin{pmatrix}
\eerho\lambda^2\  &  0  &  0\\
\dfrac{-4-3\eerho}{2}\lambda^4 &  0                &  0\\
0         &  -\lambda^2        &  0\\
0         & 0        &  -\lambda^2 \\
0 &  0  &  0\\
0 &  0  &  0
\end{pmatrix},\\
S = \operatorname{Diag}\trans{\left(\se,\secarre,6\dfrac{2-\stenseurxx}{6-\stenseurxx},6\dfrac{2-\stenseurxx}{6-\stenseurxx},\se,\se\right)},
\end{eqnarray*}
where either $\secarre=\se$ or $\eerho=-2/3$. 
\end{lem}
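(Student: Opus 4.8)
The plan is to apply the three-step methodology of the previous subsection to the order-four tensor $\A{4}$: compute $\A{4}$ with the formal code of \cite{DL11,Dpreparation} after inserting the relations already obtained at orders one to three, impose by Proposition~\ref{prop:isotropy} that $\A{4}$ is a fixed point of $\Phi_4$, and then read off the $169$ coefficient equations of Table~\ref{table:d2q9} using the linear independence of the functions $\cos^k\theta\sin^l\theta$. Since isotropy at fourth order forces isotropy at third order, I would start from the conclusion of Lemma~\ref{lem:d2q9o3}: the pair $(E,S)$ must already lie in one of the three branches produced by property \ppn{6}, \ppn{7} or \ppn{8}. I treat each branch separately, substituting the coefficients it fixes into $\A{4}$ so that the fourth-order system involves only the parameters still free after order three.

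The first key step is to eliminate the branches attached to \ppn{7} and \ppn{8}. Both of these pin $\sigmafluxx=\sigmafluxy=1/(12\sigmatenseurxx)$, whereas the fourth-order equations carrying the odd (heat-flux) moments should require $\sigmafluxx=\sigmafluxy=1/(6\sigmatenseurxx)$, that is property \ppn{9}; since the relaxation parameters are strictly positive, these two values cannot coincide, so \ppn{7} and \ppn{8} admit no fourth-order solution. Only the branch of \ppn{6} survives---there $\sfluxx$ and $\sfluxy$ were genuinely free and can now be set by \ppn{9}, which turns the corresponding entries of $S$ into $6(2-\stenseurxx)/(6-\stenseurxx)$. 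This is also why the statement keeps the full block \ppns{1}{6} (hence \ppn{10} and \ppn{11}) rather than a disjunction.

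The second key step is to solve the surviving equations for the even-moment data. I would isolate the identities attached to the second-energy moment $\ecarre$ and to the relaxation times $\se,\secarre,\stenseurxx$, and show that together they force $\sigmae=\sigmatenseurxx$ while re-confirming $\eecarreqx=\eecarreqy=0$ and $2\eecarrerho+4+3\eerho=0$. The last independent relation should then factor as a single vanishing product, giving the dichotomy \guill{$\secarre=\se$ or $\eerho=-2/3$}: combined with $\sigmae=\sigmatenseurxx$ the first alternative is property \ppn{12} and the second is property \ppn{13}. As in Lemma~\ref{lem:d2q9o3}, where an \emph{a priori} enormous number of sign combinations collapsed to a handful, I expect strict positivity of the $\sigma$'s to prune almost all spurious cases. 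For the converse (sufficiency), I would substitute each of the two admissible parameter sets back into $\A{4}$ and verify, by a machine-assisted evaluation, that all $169$ coefficient equations vanish.

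The main obstacle is computational rather than conceptual: $\A{4}$ has many more components than $\A{3}$, the $169$ polynomial identities in $\cos\theta$ and $\sin\theta$ are cumbersome, and the case analysis again threatens to explode as it did at third order. The two delicate points are (i) proving that the \ppn{7} and \ppn{8} branches are genuinely excluded---not merely restricted---for which the strict positivity of the relaxation parameters is indispensable, and (ii) extracting the single clean dichotomy \ppn{12}/\ppn{13} from the entangled even-moment equations without gaining or losing solutions.
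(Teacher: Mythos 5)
Your proposal follows essentially the same route as the paper's proof: a case split over the three third-order branches of Lemma~\ref{lem:d2q9o3}, elimination of the two branches containing \ppn{7} or \ppn{8} by positivity of the $\sigma$'s, then extraction of $\sigmafluxx=\sigmafluxy$, \ppn{9}, \ppn{11}, $\sigmae=\sigmatenseurxx$ and the \ppn{12}/\ppn{13} dichotomy inside the surviving \ppn{6} branch (the paper obtains that dichotomy from the single coefficient equation \eqq{eq211112cos2sin2}, which matches your predicted \guill{single vanishing product}), followed by verification of sufficiency by formal computation.

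The one substantive discrepancy is the mechanism you propose for your delicate point (i). You argue that the fourth-order odd-moment equations force \ppn{9} inside the \ppn{7}/\ppn{8} branches and then invoke the incompatibility of $1/(12\sigmatenseurxx)$ with $1/(6\sigmatenseurxx)$; but this implication is only conjectured (\guill{should require}), and it is not obviously available there --- in the \ppn{8} branch, for instance, $\eecarreqx$ and $\eecarreqy$ remain free parameters, so the clean reduction to \ppn{9} that occurs in the \ppn{6} branch need not occur after substituting the \ppn{8} relations. The paper's proof does not go through \ppn{9} at all for this step: it exhibits one explicit fourth-order coefficient of the mass-conservation equation, \eqq{eq111111cos2sin2}, which under $\sigmafluxx=\sigmafluxy=1/(12\sigmatenseurxx)$ forces $\sigmatenseurxx=0$ outright, contradicting positivity. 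Since your plan in any case generates all $169$ coefficient equations, the contradiction in those branches would surface in whatever form the computation delivers, so this is a mispredicted detail rather than a fatally wrong approach; but as written, your exclusion argument for \ppn{7} and \ppn{8} is a conjecture about the outcome of the computation, not yet a proof.
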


\begin{proof}
Since there are three sets of coefficients giving third order isotropy, the proof is divided into three cases. 

The cases with \ppns{1}{5} and either \ppn{7} or \ppn{8} are forbidden because they both imply the condition $\sigmatenseurxx=0$, using eq. \eqq{eq111111cos2sin2}. 

So we have to assume \ppns{1}{6} in order to impose isotropy at third order. These assumptions imply $\eecarreqx=\eecarreqy=0$ using eqs.~\eqq{eq111222cos5sin0} and \eqq{eq111222cos4sin1} and $\sigmafluxy=\sigmafluxx$ by eq. \eqq{eq212222cos5sin1}. Then eq. \eqq{eq311113cos2sin2} involves property \ppn{9} and eq.~\eqq{eq112221cos1sin1} involves property \ppn{11}. Finally the dichotomy between \ppn{12} and \ppn{13} comes from eq.~\eqq{eq211112cos2sin2}. 
\end{proof}

\begin{rmk}\ \\
- This Lemma is the last step to prove that the heat flux and the momentum are collinear: the property of isotropy at fourth order imposes the equality between both relaxations times $\sfluxx$ and $\sfluxy$. 
 Moreover, it links these two relaxation times with 
 the other one $\stenseurxx$. \\
- The property \ppn{11} is involved in a particular case in \cite{LL00} ($\eerho=-2$, $\eecarrerho=1$) but without the constraint on the relaxation times given in \ppn{9}.\\
- The case \ppns{1}{5}-\ppns{9}{11}-\ppn{13} insures isotropy at fourth order. However, the combination of relation $\eerho=6c_0^2-4$ and property \ppn{13} yields to $c_0^2=5/9$, and this value for the sound velocity involves that the bulk viscosity disappears by using eq.~\eqref{eq:viscosities} \cite{D01}.\\
- The isotropy at fourth order is quite restrictive: both relaxation times linked to the viscosities ($\se$ and $\stenseurxx$) must be equal.\\
- Note that these results generalize those obtained in \cite{DL09} for quartic parameters. More precisely, the values $\sigmafluxx=\sqrt{3}/3$ and $\sigmatenseurxx=\sqrt{3}/{6}$ proposed in \cite{DL09} are compatible with property \ppn{9}.
\end{rmk}

\subsubsection{Fifth order}

For the \ddqn scheme, we have the following Lemma:
\begin{lem}
\label{lem:d2q9o5}
The \ddqn scheme is never isotropic at fifth order. 
\end{lem}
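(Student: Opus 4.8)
The plan is to follow the same order-by-order methodology used in Lemmas~\ref{lem:d2q9o1}--\ref{lem:d2q9o4}, and to show that the isotropy conditions at fifth order are incompatible with the conditions already forced at fourth order. By Definition~\ref{def:isotropy}, the scheme is isotropic at fifth order if and only if it is isotropic at fourth order \emph{and} the fifth-order tensor $\A{5}$ is a fixed point of $\Phi_5$. So the first step is to take as hypotheses the full set of relations established in Lemma~\ref{lem:d2q9o4}: properties \ppns{1}{6}, \ppns{9}{11}, together with one of \ppns{12}{13}. Under these relations the matrices $E$ and $S$ are almost completely pinned down, with essentially $\eerho$, $\se$, $\secarre$ and $\stenseurxx$ (subject to $\secarre=\se$ or $\eerho=-2/3$) remaining free. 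The idea is to substitute this constrained form of $E$ and $S$ into the fifth-order equivalent equations and derive a contradiction.

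First I would compute the equivalent equations of order $5$ (using the formal code of \cite{DL11,Dpreparation}) specialized to the fourth-order-isotropic matrices, and then write out the fifth-order isotropy conditions $\Phi_5(r)(\A{5})=\A{5}$ via Proposition~\ref{prop:isotropy}. As in the lower orders, each such condition is polynomial in $\cos\theta$ and $\sin\theta$, and the freeness of the trigonometric monomials turns it into a finite system of polynomial equations in the surviving parameters. The goal is to isolate one coefficient equation---say one of the \eqq{eq...cosksinl} coming from a high-degree tensor component $\A{5}^{i,a_1\cdots a_5}_j$---that, once the fourth-order relations \ppn{9} ($\sigmafluxx=\sigmafluxy=1/(6\sigmatenseurxx)$), \ppn{11} ($2\eecarrerho+4+3\eerho=0$) and \ppns{12}{13} are substituted in, reduces to an impossible statement. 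Since all of $\sigmae$, $\sigmaecarre$, $\sigmafluxx$, $\sigmafluxy$, $\sigmatenseurxx$, $\sigmatenseurxy$ are strictly positive (as emphasized before the subsubsection), the natural contradiction is a relation forcing one of these $\sigma$'s to vanish, be negative, or equal an impossible value---exactly the mechanism that already ruled out \ppn{7} and \ppn{8} at fourth order through $\sigmatenseurxx=0$.

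Concretely, the two fourth-order branches must be treated separately. In the branch \ppn{12} ($\sigmae=\sigmaecarre=\sigmatenseurxx$) the system is the most symmetric, so I would look for the fifth-order equation whose coefficient, after imposing \ppn{9} and \ppn{11}, collapses to a pure numerical identity that fails (e.g. a nonzero constant set equal to $0$) or to $\sigmatenseurxx=0$, contradicting positivity. In the branch \ppn{13} ($2+3\eerho=0$), the density contribution is fixed, and I would exploit the extra rigidity of $\eerho=-2/3$ together with $2\eecarrerho+4+3\eerho=0$ to reach an analogous contradiction. Showing that \emph{no} assignment of the few remaining free parameters can satisfy all fifth-order coefficient equations simultaneously is what establishes that the scheme is never isotropic at fifth order.

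The main obstacle will be the sheer combinatorial and algebraic bulk: at fifth order the tensor $\A{5}$ has many components, producing on the order of $9\cdot 6$ scalar conditions (cf. Table~\ref{table:nb_generals_equations}), and the equivalent equations at this order are long polynomials in the relaxation parameters. The delicate part is not deriving the equations---that is mechanical via the formal code---but \emph{selecting} a minimal incompatible subsystem and verifying that the contradiction survives in \emph{every} fourth-order-isotropic branch (both \ppn{12} and \ppn{13}, and both sub-cases $\secarre=\se$ and $\eerho=-2/3$). I expect the cleanest route is to identify a single coefficient equation per branch that, after back-substitution of the known relations, forces a positive $\sigma$ to take a forbidden value, thereby closing all cases at once.
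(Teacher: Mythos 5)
Your plan matches the paper's proof: assume the fourth-order characterization of Lemma~\ref{lem:d2q9o4} in both branches (\ppn{12} and \ppn{13}), expand the fifth-order conditions of Proposition~\ref{prop:isotropy} in the free trigonometric monomials, and exhibit a coefficient equation that no choice of the remaining parameters can satisfy. The paper's computation realizes exactly the mechanism you mention parenthetically rather than your primary one: the equation \eqq{eq1111112} reduces to $c\cos\theta\sin\theta=0$ with $c$ a nonzero numerical constant \emph{independent} of all remaining parameters, so the contradiction holds in both branches at once and the positivity of the $\sigma$'s is not even needed.
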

\begin{proof}
 In both cases \ppns{1}{5}-\ppns{9}{12} and  \ppns{1}{5}-\ppns{9}{11}-\ppn{13}, Proposition~\ref{prop:isotropy} gives equations that can not be solve for every rotation of the frame: for example \eqq{eq1111112} is of type $c\cos\theta\sin\theta$, where $c$ is a given real constant independent of the parameters (namely, $c$ do not vanish). 
\end{proof}

\subsection{\ddqt scheme}
\label{sec:d2q13}

In this Subsection, we give the main result on the \ddqt scheme in Proposition~\ref{prop:d2q13}. The proof of this Proposition is then subdivided into two Lemmas in order to detail the methodology due to the definition of the isotropy. 

\begin{prop}
\label{prop:d2q13}
 Let $L_4(r)$ be the lack of isotropy for the \ddqt scheme at fourth order for the rotation $r$, then we get the following propositions.
\begin{itemize}
 \item The scheme is isotropic at first order, that is $L_4(r)=\grandO({\Delta t}^2),\qq r\in\rotation[2]$, iff $\eeqx$, $\eeqy$, $\etenseurxxrho$, $\etenseurxxqx$, $\etenseurxxqy$, $\etenseurxyrho$, $\etenseurxyqx$, and $\etenseurxyqy$ are zero.
 \item The scheme is isotropic at second order, that is $L_4(r)=\grandO({\Delta t}^3),\qq r\in\rotation[2]$,
iff the four following properties are satisfied:
\begin{itemize}
\item it is isotropic at first order, 
\item $\efluxxrho$, $\efluxyrho$, $\execarrerho$, and $\eyecarrerho$ are zero,  
\item $\efluxxqx$ and $\efluxyqy$ are equal to $(\sigmatenseurxx-4\sigmatenseurxy)/(3\sigmatenseurxy+\sigmatenseurxx)$, 
\item $\execarreqx=\eyecarreqy$, $\efluxxqy=-\efluxyqx$, and $\execarreqy=-\eyecarreqx$.
\end{itemize}
\end{itemize}
\end{prop}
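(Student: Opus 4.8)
The plan is to apply Proposition~\ref{prop:isotropy} order by order: the \ddqt scheme is isotropic at order $m$ precisely when every tensor $\A{n}$, $1\le n\le m$, is a fixed point of the group operation $\Phi_n$. Writing the fixed-point condition $\Phi_n(r)(\A{n})=\A{n}$ for a generic rotation of angle $\theta$ and expanding produces polynomial identities in $\cos\theta$ and $\sin\theta$; because the monomials $\theta\mapsto\cos^k\theta\sin^l\theta$ are linearly independent, each coefficient must vanish, which yields the $48$ equations at first order and the $78$ equations at second order recorded in Table~\ref{table:d2q13}. These equations are polynomial in the entries of $E$ and in the relaxation parameters $\sigmatenseurxx$, $\sigmatenseurxy$, and I would solve them by triangulation: isolate the equations that involve a single new unknown, use them to eliminate that unknown, and substitute into the rest --- exactly the scheme already carried out for the \ddqn case in Lemmas~\ref{lem:d2q9o1} and~\ref{lem:d2q9o2}.

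For the first-order statement, I expect the computation to reproduce verbatim that of Lemma~\ref{lem:d2q9o1}. The tensor $\A{1}$ couples the three conserved moments through the first column of $E$ and the lattice geometry only; the rotation-invariance equations \eqq{eq211cos1sin1} and \eqq{eq211cos0sin2} force $\etenseurxxrho=\etenseurxyrho=0$, the four equations \eqq{eq222cos2sin1}, \eqq{eq222cos1sin0}, \eqq{eq222cos0sin1}, \eqq{eq222cos0sin0} force $\etenseurxxqx=\etenseurxxqy=\etenseurxyqx=\etenseurxyqy=0$, and \eqq{eq212cos0sin0} together with \eqq{eq322cos0sin1} give $\eeqx=\eeqy=0$. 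Since the same eight coefficients and the same algebraic form of the relevant tensor entries occur, no feature of the \ddqt geometry enters at this order, which explains why the first-order characterisation coincides with that of the \ddqn scheme.

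The second order is where the \ddqt geometry genuinely intervenes and is the main obstacle. Now $\A{2}$ receives contributions not only from $E$ and the relaxation times but also from the four extra moments ($\mxecarre$, $\myecarre$, $\mecube$, $\mxxe$), so the fixed-point system is larger and its coefficients differ from those of the \ddqn scheme. I would first use the equations linear in the $\rho$-column entries --- the analogues of \eqq{eq2221cos0sin0} and \eqq{eq3111cos0sin0} --- to obtain $\efluxxrho=\efluxyrho=\execarrerho=\eyecarrerho=0$; the antisymmetry relations $\efluxxqy=-\efluxyqx$ and $\execarreqy=-\eyecarreqx$ together with the symmetry $\execarreqx=\eyecarreqy$ should then drop out of the coefficients of the mixed monomials $\cos\theta\sin\theta$ and $\sin^2\theta$, exactly as $\efluxxqx=\efluxyqy$ and $\efluxyqx=-\efluxxqy$ did in Lemma~\ref{lem:d2q9o2}. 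The delicate point is the single equation analogous to \eqq{eq2233cos2sin2} that pins down the common value $\efluxxqx=\efluxyqy$: the additional coupling through the fifth-order moments modifies the denominator from $2\sigmatenseurxy+\sigmatenseurxx$ to $3\sigmatenseurxy+\sigmatenseurxx$, and the crux is to propagate the extra-moment contributions correctly through the contraction so that the factor $3$ emerges in place of the \ddqn value $2$.

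Finally, I would establish sufficiency by substituting the derived relations into the full list of $78$ equations given by Table~\ref{table:d2q13} and checking that every coefficient of every monomial $\cos^k\theta\sin^l\theta$ vanishes identically, so that $\A{1}$ and $\A{2}$ are fixed by $\Phi_1$ and $\Phi_2$ for all $r\in\rotation[2]$ and the lack of isotropy is $\grandO(\Delta t^3)$. This last verification is lengthy but mechanical once the elimination order above is fixed, and is naturally entrusted to the formal-calculus code mentioned in the text.
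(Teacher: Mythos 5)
Your overall strategy --- apply Proposition~\ref{prop:isotropy}, expand the fixed-point identities in the linearly independent family $\cos^k\theta\sin^l\theta$, and eliminate unknowns order by order following the \ddqn template --- is exactly the methodology the paper invokes for the \ddqt scheme, and your first-order treatment matches Lemma~\ref{lem:d2q13o1} (the paper likewise records that the first-order conditions coincide with those of the \ddqn scheme, even though, as the paper remarks, the coefficients of the trigonometric polynomials themselves do depend on the geometry, so the agreement is in the conclusions rather than ``verbatim'' in the equations).

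At second order, however, there is a concrete gap. The paper's own proof of this statement, Lemma~\ref{lem:d2q13o2}, establishes that second-order isotropy forces not only the vanishing of the $\rho$-column entries and the rotation-dilatation structure you describe, but also property \ppt{5}: the equilibrium coefficients of the fifth-order moments are pinned to the values $\execarreqx=a$ and $\execarreqy=b$ of \eqref{eq:defab}, where $a$ and $b$ are explicit functions of $\efluxxqx$, $\efluxyqx$, $\sigmatenseurxx$ and $\sigmatenseurxy$. Your elimination never produces these relations --- in your plan $\execarreqx$ and $\execarreqy$ are constrained only by the symmetry relations $\execarreqx=\eyecarreqy$ and $\execarreqy=-\eyecarreqx$ --- and they are not consequences of the conditions you propose to substitute back. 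Consequently your final step (``substitute the derived relations into the full list of $78$ equations and check that every coefficient vanishes'') cannot close as described: the residual equations coupling the heat-flux equilibrium to the fifth-order-moment equilibrium, precisely the analogues of the \eqq{eq2233cos2sin2}-type equation you single out as the crux, remain and are exactly the ones generating \ppt{5}. In short, propagating the extra-moment contributions through the contraction produces more than a change of denominator from $2\sigmatenseurxy+\sigmatenseurxx$ to $3\sigmatenseurxy+\sigmatenseurxx$; it produces an additional family of constraints tying $\execarreqx$, $\execarreqy$ to the relaxation times, which your plan omits and which the paper's proof makes essential.
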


The proof of Proposition \ref{prop:d2q13} is detailed below thanks to two Lemmas: one for each order. The methodology is the same as the one for the \ddqn scheme. 

\subsubsection{First order}

For the \ddqt scheme, the isotropy at first order is characterized by the following Lemma:

\begin{lem}
\label{lem:d2q13o1}
The \ddqt scheme is isotropic at the first order if and only if the properties \ppts{1}{2} are satisfied.
\begin{prpt}{1}
At equilibrium, the energy is proportional to the density (the proportionality factor is relative to the sound velocity).
\end{prpt}
\begin{prpt}{2}
At equilibrium, both moments $\mtenseurxx$ and $\mtenseurxy$ are zero.
\end{prpt}
Both properties \ppts{1}{2} implies the following structure of the matrix $E$ (there is no constraint on the matrix $S$ at first order):
\begin{equation*}
E = \begin{pmatrix}
\eerho\lambda^2\  &  0  &  0\\
0 &  0  &  0\\
0 &  0  &  0\\
\efluxxrho\lambda^3        &  \efluxxqx\lambda^2        &  \efluxxqy\lambda^2\\
\efluxyrho\lambda^3        &  \efluxyqx\lambda^2        &  \efluxyqy\lambda^2\\
\execarrerho\lambda^5        &  \execarreqx\lambda^4        &  \execarreqy\lambda^4\\
\eyecarrerho\lambda^5        &  \eyecarreqx\lambda^4        &  \eyecarreqy\lambda^4\\
\eecarrerho\lambda^4 &   \eecarreqx\lambda^3               &  \eecarreqy\lambda^3\\
\eecuberho\lambda^6 &   \eecubeqx\lambda^5                & \eecubeqy\lambda^5 \\
\exxerho\lambda^4   &   \exxeqx\lambda^3		 &   \exxeqy\lambda^3
\end{pmatrix}.
\end{equation*}
\end{lem}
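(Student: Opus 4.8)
The plan is to follow verbatim the structure of the first-order argument already carried out for the \ddqn scheme in Lemma~\ref{lem:d2q9o1}. Indeed the order-one tensor $\A{1}\in\rg$ is carried only by the three conserved indices and the space direction, so it has the same $3\times 2\times 3=18$ components for both schemes, and the equations produced by Proposition~\ref{prop:isotropy} carry the same identifiers as in the \ddqn case; one thus obtains exactly the $48$ scalar equations recorded in Table~\ref{table:d2q13}. Only the numerical coefficients of these equations differ, since they are assembled from the thirteen-velocity geometry and the orthogonalized moment basis of the \ddqt scheme.

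The first concrete step is to make $\A{1}$ explicit through the Taylor expansion of \eqref{ddqq}, using the formal algorithm of \cite{Dpreparation}. The mass equation ($i=1$) is fixed by the lattice and reads $\partial_t\rho+\partial_x\qx+\partial_y\qy=0$, independently of $E$ and $S$; the two momentum equations ($i=2,3$) carry coefficients that are linear combinations of the rows of $E$ attached to the second-order moments $\mtenseurxx$ and $\mtenseurxy$, together with the density--energy coupling bearing the sound velocity. The decisive bookkeeping observation is that the only entries of $E$ entering $\A{1}$ are $\eeqx$, $\eeqy$, $\etenseurxxrho$, $\etenseurxxqx$, $\etenseurxxqy$, $\etenseurxyrho$, $\etenseurxyqx$, and $\etenseurxyqy$; the remaining moments ($\mfluxx$, $\mfluxy$, $\mxecarre$, and so on) feed only orders two and higher.

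I would then impose $\Phi_1(r)(\A{1})=\A{1}$ for every $r\in\rotation[2]$, substitute the rotation angle $\theta$, and collect the coefficients of the free trigonometric monomials. Mirroring the \ddqn computation, the equations \eqq{eq211cos1sin1} and \eqq{eq211cos0sin2} force $\etenseurxxrho=\etenseurxyrho=0$; the subsystem \eqq{eq222cos2sin1}, \eqq{eq222cos1sin0}, \eqq{eq222cos0sin1}, \eqq{eq222cos0sin0} forces the four remaining second-order equilibrium coefficients to vanish; and \eqq{eq212cos0sin0} together with \eqq{eq322cos0sin1} yields $\eeqx=\eeqy=0$. These eight vanishing conditions are precisely properties \ppts{1}{2} and produce the announced shape of $E$. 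It remains to check sufficiency, namely that with these coefficients set to zero the surviving entries of $\A{1}$ reassemble into the rotation-covariant acoustic structure, so that all $48$ equations hold for every $\theta$. The main obstacle is the first step: computing the coefficients of $\A{1}$ for the \ddqt geometry and confirming that exactly these eight equilibrium parameters appear; once $\A{1}$ is known the trigonometric algebra is routine and strictly parallel to the \ddqn case.
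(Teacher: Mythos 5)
Your proposal is correct and takes essentially the same route as the paper: the paper offers no separate proof for this lemma, stating only that the methodology is that of the \ddqn case and remarking that the first-order conditions coincide for both schemes, and your argument is precisely the \ddqn first-order proof (eqs.~\eqq{eq211cos1sin1}, \eqq{eq211cos0sin2}, the \eqq{eq222} subsystem, then \eqq{eq212cos0sin0} and \eqq{eq322cos0sin1}) transplanted to the \ddqt geometry, resting on the correct observation that only the energy and stress-moment rows of $E$ feed the tensor $\A{1}$. Your honest caveat---that one must still run the formal Taylor expansion for the thirteen-velocity scheme to confirm the coefficients---is no weaker than what the paper itself provides, since the paper delegates that computation to the formal code as well.
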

\begin{rmk}
 The first order isotropy conditions are the same for the \ddqn and the \ddqt schemes. 
\end{rmk}

\subsubsection{Second order}

For the \ddqt scheme, the isotropy at second order is described in the following Lemma:
\begin{lem}
\label{lem:d2q13o2}
The \ddqt scheme is isotropic at second order if and only if the properties \ppts{1}{5} are satisfied.
\begin{prpt}{3}
At equilibrium the heat flux is a rotation-dilatation of the momentum, more precisely $\mfluxx=\efluxxqx\lambda^2q_x+\efluxxqy\lambda^2q_y$ and $\mfluxy=-\efluxxqy\lambda^2q_x+\efluxxqx\lambda^2q_y$.
\end{prpt}
\begin{prpt}{4}
At equilibrium the moment of order five $(\mxecarre,\myecarre)$ is a rotation-dilatation of the momentum, more precisely
$\mxecarre=\execarreqx\lambda^4q_x+\execarreqy\lambda^4q_y$ and $\myecarre=-\execarreqy\lambda^4q_x+\execarreqx\lambda^4q_y$.
\end{prpt}
\begin{prpt}{5}
 The equilibrium states are related to the relaxation times by the relations $\execarreqx = a$ and $\execarreqy = b$, with
\begin{equation}\label{eq:defab}
  a :=  -\dfrac{1}{12}\dfrac{7(7\sigmatenseurxx{+}2\sigmatenseurxy)\efluxxqx+5(17\sigmatenseurxx{-}4\sigmatenseurxy)}{\sigmatenseurxx+\sigmatenseurxy}, \,
 b := \dfrac{7}{12}\efluxyqx\dfrac{7\sigmatenseurxx+2\sigmatenseurxy}{\sigmatenseurxx+\sigmatenseurxy}.
\end{equation}
\end{prpt}
Properties \ppts{1}{5} implies the following structure of the matrix $E$ (there is no constraint the matrix $S$ at second order):
\begin{equation*}
E = \begin{pmatrix}
\eerho\lambda^2\  &  0  &  0\\
0 &  0  &  0\\
0 &  0  &  0\\
0        &  \efluxxqx\lambda^2        &  \efluxxqy\lambda^2\\
0        &  -\efluxxqy\lambda^2        &  \efluxxqx\lambda^2\\
0        &  a\lambda^4        &  b\lambda^4\\
0        &  -b\lambda^4        &  a\lambda^4\\
\eecarrerho\lambda^4 &   \eecarreqx\lambda^3               &  \eecarreqy\lambda^3\\
\eecuberho\lambda^6 &   \eecubeqx\lambda^5                & \eecubeqy\lambda^5 \\
\exxerho\lambda^4   &   \exxeqx\lambda^3		 &   \exxeqy\lambda^3
\end{pmatrix},
\end{equation*}
where $a$ and $b$ are given in \eqref{eq:defab}. 
\end{lem}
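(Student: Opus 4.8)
The plan is to apply the three-step methodology of this Section to the second-order tensor $\A{2}$ of the \ddqt scheme, reasoning exactly as in the proof of Lemma~\ref{lem:d2q9o2}. First I would assume the first-order isotropy of Lemma~\ref{lem:d2q13o1}, so that $E$ already has the reduced form given there and only the parameters counted in Table~\ref{table:d2q13} remain free; with a formal code \cite{DL11,Dpreparation} I would then express every component of $\A{2}$ as a function of the surviving coefficients of $E$ and $S$, and impose the fixed-point condition $\Phi_2(r)(\A{2})=\A{2}$ for all $r\in\rotation[2]$ coming from Proposition~\ref{prop:isotropy}.

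Parametrizing $r$ by the rotation angle $\theta$, each component of the identity $\Phi_2(r)(\A{2})-\A{2}=0$ is a polynomial in $\cos\theta$ and $\sin\theta$. By the linear independence of the trigonometric monomials I would split every such identity into the $78$ scalar equations announced in Table~\ref{table:d2q13}, each depending only on the coefficients of $E$ and $S$. Solving this system, I expect to isolate three families of relations, just as in the nine-velocity case: a first family forcing the density columns $\efluxxrho$, $\efluxyrho$, $\execarrerho$ and $\eyecarrerho$ to vanish; a second family imposing the rotation-dilatation symmetries $\efluxyqy=\efluxxqx$, $\efluxyqx=-\efluxxqy$ (property \ppt{3}) and $\eyecarreqy=\execarreqx$, $\eyecarreqx=-\execarreqy$ (property \ppt{4}); and a last family producing the rational relations \eqref{eq:defab} that express $\execarreqx$ and $\execarreqy$ through $\efluxxqx$, $\efluxyqx$, $\sigmatenseurxx$ and $\sigmatenseurxy$ (property \ppt{5}). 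Throughout I would use the positivity of $\sigmatenseurxx$ and $\sigmatenseurxy$ to discard spurious branches and to guarantee that the denominator $\sigmatenseurxx+\sigmatenseurxy$ in \eqref{eq:defab} never vanishes.

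Having obtained these necessary conditions, I would close the equivalence by the converse: substituting the reduced matrix $E$ together with properties \ppts{3}{5} back into $\A{2}$ and checking that the $78$ equations are then identically satisfied, so that $\A{2}$ is a fixed point of $\Phi_2$ and the scheme is isotropic at second order, which also confirms that the chosen relations are sufficient.

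The hard part will be the symbolic bookkeeping rather than any single conceptual step. Because $E$ is kept full and the \ddqt geometry couples thirteen velocities, the entries of $\A{2}$ are nonlinear in the parameters and the $78$ coefficient equations intertwine equilibrium states with relaxation times; the delicate point is to order the elimination so that the system becomes triangular---clearing the density columns first, then the off-diagonal momentum couplings, and only afterwards the order-five coefficients---so that the rational expressions \eqref{eq:defab} for $a$ and $b$ emerge without ambiguity. This is precisely why the explicit computation must be delegated to formal calculus.
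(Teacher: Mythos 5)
Your proposal is correct and takes essentially the same approach as the paper: the paper prints no separate proof of this lemma, saying only that \guill{the methodology is the same as the one for the \ddqn scheme}, and your argument is exactly that methodology carried out on the pattern of the explicit proof of Lemma~\ref{lem:d2q9o2}. In particular, your sequence---assume first-order isotropy, expand $\Phi_2(r)(\A{2})=\A{2}$ into the $78$ coefficient equations of Table~\ref{table:d2q13} by freeness of the trigonometric monomials, eliminate the density columns, then the rotation-dilatation couplings \ppts{3}{4}, then the rational relations \eqref{eq:defab}, and finally check sufficiency by substitution---is precisely what the paper's stated three-step procedure prescribes.
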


\begin{rmk}
The properties of isotropy at second order for the \ddqn and \ddqt schemes are of the same type,
 though they are less constraining for the \ddqt scheme (the coefficient $\efluxxqx$ does not have to be zero for instance).
\end{rmk}

\subsubsection{Third order}

Because of the very high number of cases offered to guarantee the isotropy at third order
(to our knowledge, not less than 17 different cases have to be investigated), 
only some sufficient conditions are given in this paper.
First of all, we already know that the isotropy at third order implies properties \ppts{1}{9}.

\begin{prpt}{7}
At equilibrium, the heat flux is proportional to the momentum: that is $\mfluxx=\efluxxqx\lambda^2qx$ and $\mfluxy=\efluxxqx\lambda^2qy$.
\end{prpt}

\begin{prpt}{8}
At equilibrium the moment of order five $(\mxecarre,\myecarre)$ is proportional to the momentum: that is $\mxecarre=\execarreqx\lambda^4qx$ and $\mfluxy=\execarreqx\lambda^4qy$.
\end{prpt}

\begin{prpt}{9}
 The proportional coefficients $\efluxxqx$ and $\execarreqx$ are linked by the relation $\eecarreqx=-(21/8)\efluxxqx-65/24$. 
\end{prpt}

Assuming properties \ppts{1}{9}, we give some example of sets of coefficients that make the \ddqt scheme isotropic in the Annex. 

\section{Numerical results for the \ddqn scheme}
\label{sec:numeric}

In this Section, we present some preliminary numerical results in order to appreciate the lack of isotropy order after order. Since it is not easy to observe the lack of isotropy in the sense of Definition~\ref{def:isotropy}, we have to investigate an other way to make explicit the isotropy error. 

Then, it seems natural to represent the evolution of a radial function after a few time steps. Let $r\mapsto\rho(r,\theta)$ be the density solution of the linear lattice Boltzmann \ddqn scheme described in Section~\ref{sec:d2q9} and initialized with the Gaussian for the first moment $\rho(x,y) = \exp(-10x^2 - 10y^2)$ and with zero for the others. Indeed the most the scheme is isotropic, the most the behavior of $r\mapsto\rho(r,\theta)$ is the same for all of the angle $\theta$.  We consider a \ddqn scheme with $100\times100$ space meshing (in our case $\Delta x = 0.02$ and $\lambda = 1$), our purpose is to make explicit the isotropy error at $t=12\Delta t$. Thus, we plot the density $r\mapsto\rho(r,\theta)$ where $\theta$ is fixed:
\begin{itemize}
 \item $\rho_0:=r\mapsto\rho(r,{\theta=0})$ (drawn with $\vee$),
 \item $\rho_{\frac{\pi}{2}}:=r\mapsto\rho(r,{\theta=\frac{\pi}{2}})$ (drawn with $\wedge$),
 \item $\rho_{\frac{\pi}{4}}:=r\mapsto\rho(r,{\theta=\frac{\pi}{4}})$ (drawn with $\circ$),
 \item $\rho_{\arctan(1/2)}:=r\mapsto\rho(r,{\theta=\arctan(1/2)})$ (drawn with $<$).
\end{itemize}
For each density, we only have the values on the nodes of the mesh: namely for each angle $\theta$, the vectors of the abscissa of $(r,\rho_\theta)$ are not equal. Since we have to compare these densities, we interpolate the value with a spline method of order 5 so that the interpolation error does not interfere.

We first consider the following parameters for which the scheme is isotropic at first order but not at second order:
$$E  = 
 \begin{pmatrix}
{-2}\  &  0  &  0\\
 {6}&  0  &  0 \\
0         &  {-2}        &  0\\
 0         & 0        &  {-2} \\

 0 &  0  &  0\\
 0 &  0                &  0
 \end{pmatrix}\quad \text{and}\quad 
S  = \operatorname{diag}\begin{pmatrix}
                        \se\\{0.5\se} \\ \sfluxx\\ \sfluxx\\ {\stenseurxx}\\ {\stenseurxx}
                       \end{pmatrix},
$$
where $\zeta= 1.84\,10^{-5}$, $\mu=2/3\zeta$, $\se = 3\zeta/(\lambda\Delta x)=1.9977944349438221$, $\sfluxx =1.3$, $\stenseurxx=3\mu/(\lambda\Delta x)=1.9985290825952098$.

The different curves $\rho_\theta$ are plotted in Figure~\ref{fig:12}. The isotropy of the scheme is weak in the sense that, for instance, the plot of $\rho_0-\rho_{\frac{\pi}{4}}$ in Figure~\ref{fig:err12} gives an error of size 6.5E-4. We remark that the curves of $\rho_0$ and $\rho_{\frac{\pi}{2}}$ are the same: the scheme is invariant by a rotation of $\pi/2$. 

\begin{figure}[htbp]
\begin{minipage}{0.495\textwidth}
\begin{center}
  \includegraphics[width=\textwidth]{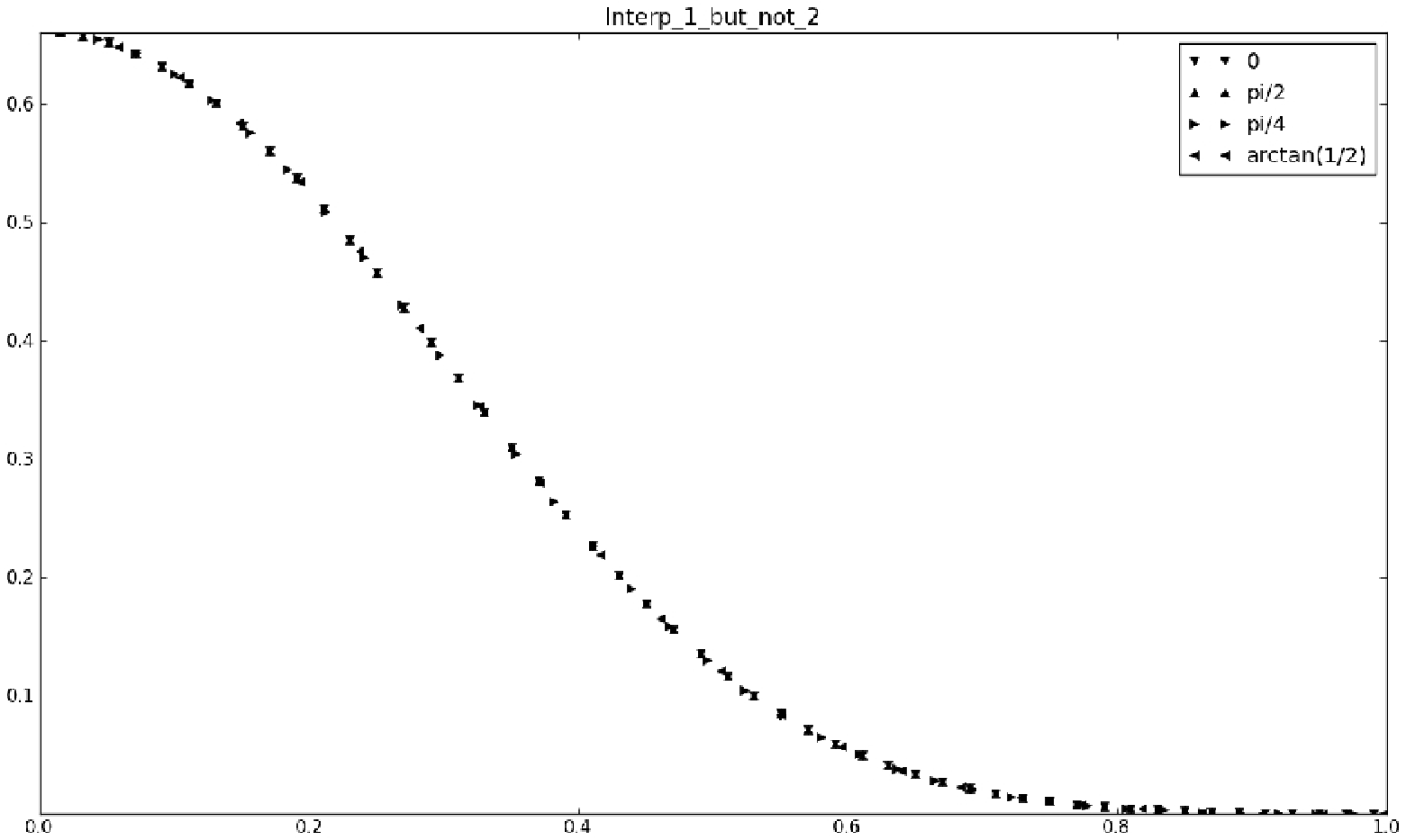}
  \caption{Isotropy at first order}
  \label{fig:12}
\end{center}
\end{minipage}
\begin{minipage}{0.495\textwidth}
\begin{center}
  \includegraphics[width=\textwidth]{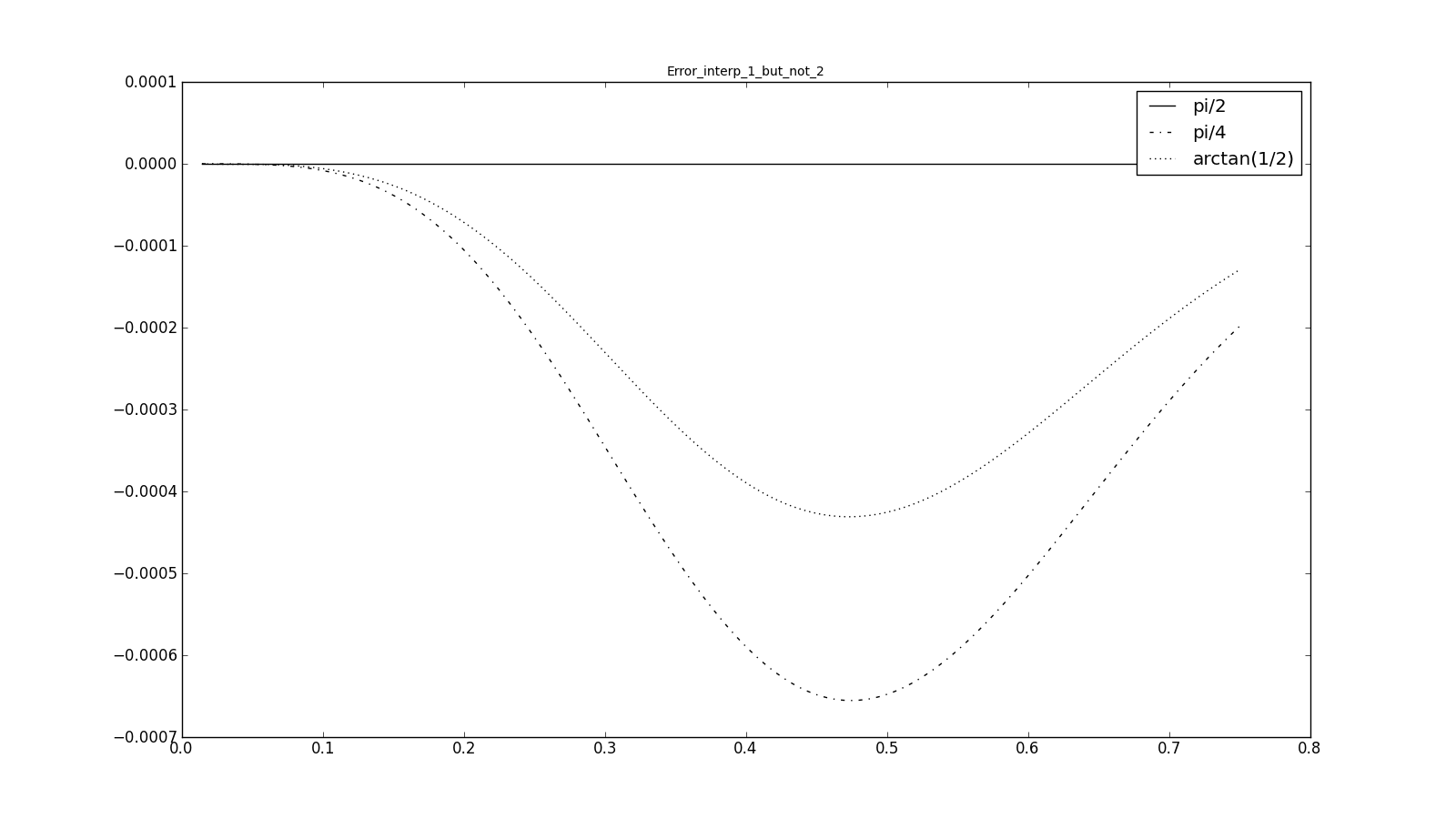}
  \caption{Isotropy error at first order}
\label{fig:err12}
\end{center}
\end{minipage}
\end{figure}

We then consider the following parameters, with no change on the relaxations times, for which the scheme is isotropic at second order but not at third order:
$$E  = 
 \begin{pmatrix}
{-2}\  &  0  &  0\\
 {6}&  0  &  0\\
 0         &  {-1}        &  0\\
 0         & 0        &  {-1} \\
 0 &  0  &  0\\
 0 &  0                &  0
\end{pmatrix}\quad \text{and}\quad 
S  = \operatorname{diag}\begin{pmatrix}
                        \se\\ 0.5\se\\ \sfluxx\\\sfluxx\\ {\stenseurxx}\\ {\stenseurxx}
                       \end{pmatrix}.
$$

In Figure~\ref{fig:23}, we observe that this choice of parameters improves the isotropy in the sense that the different curves are more similar but the difference between $\rho_0$ and $\rho_{\frac{\pi}{4}}$ is still of order 4.5E-4 on Figure~\ref{fig:err23}.

\begin{figure}[htbp]
\begin{minipage}{0.495\textwidth}
\begin{center}
  \includegraphics[width=\textwidth]{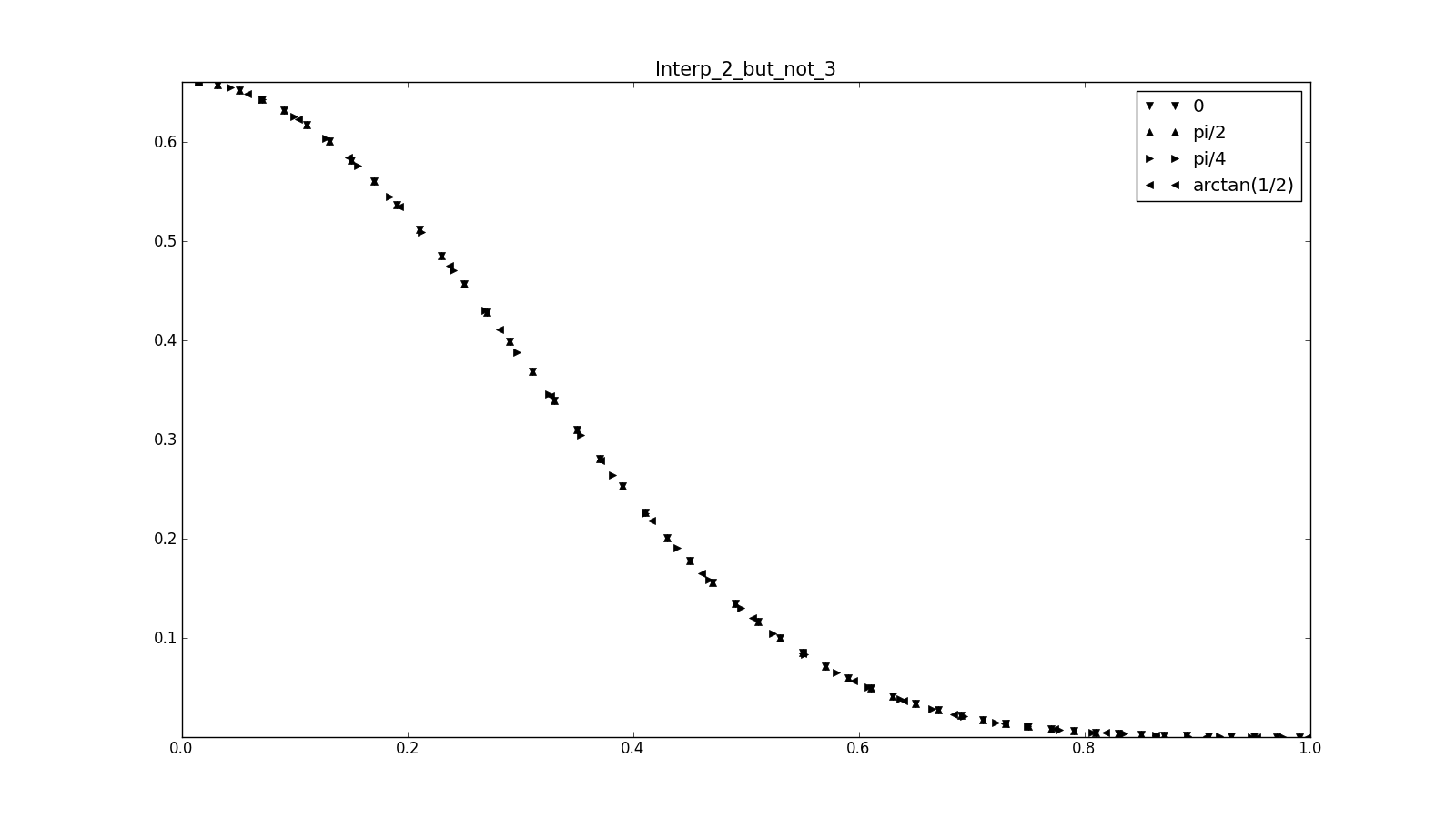}
  \caption{Isotropy at second order}
  \label{fig:23}
\end{center}
\end{minipage}
\begin{minipage}{0.495\textwidth}
\begin{center}
  \includegraphics[width=\textwidth]{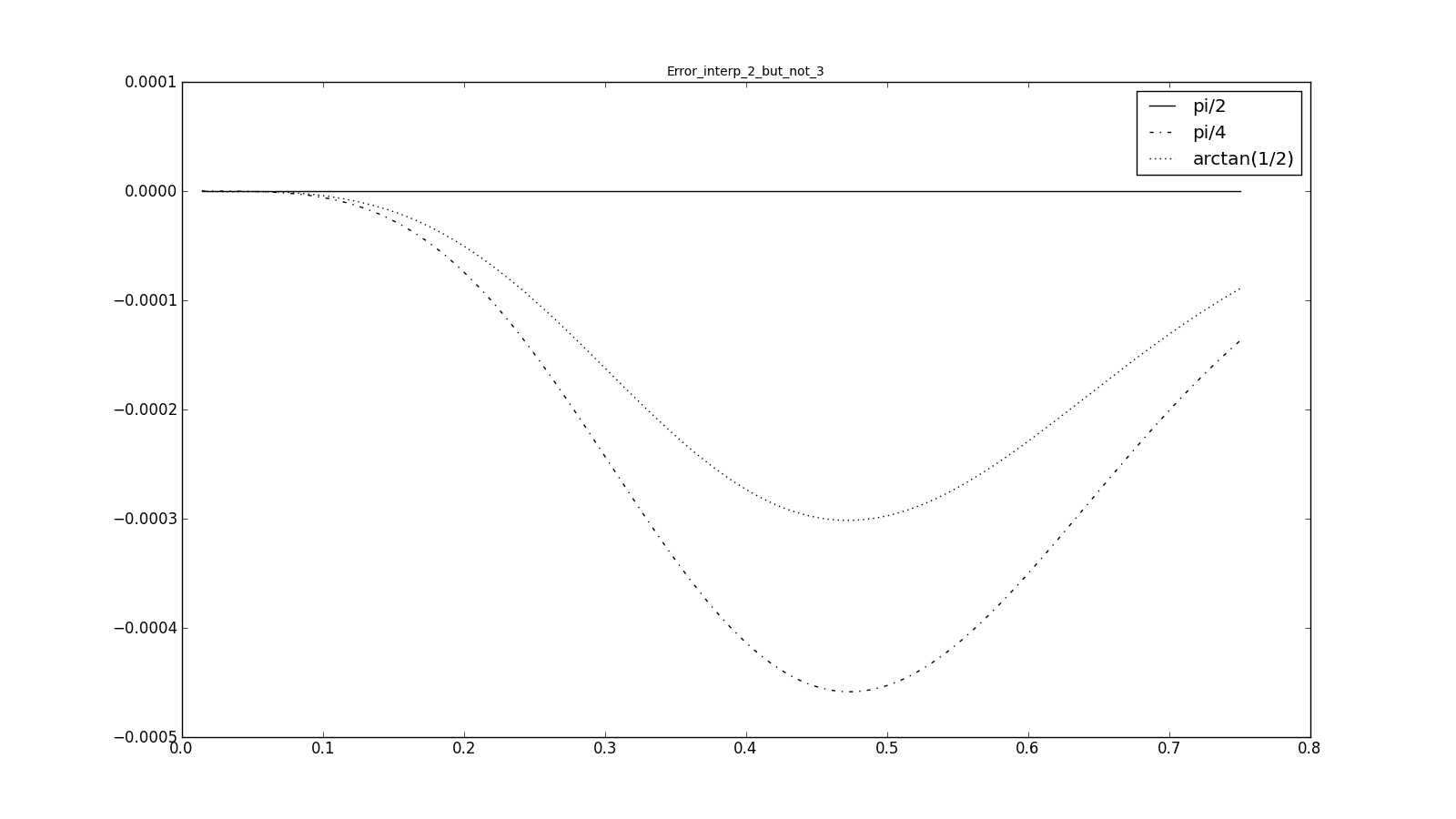}
  \caption{Isotropy error at second order}
\label{fig:err23}
\end{center}
\end{minipage}
\end{figure}

We next consider the following parameters, with no change on the relaxations times, for which the scheme is isotropic at third order but not at fourth order:
$$E  = 
 \begin{pmatrix}
{-2}\  &  0  &  0\\
 {1}&  0  &  0\\
 0         &  {-1}        &  0\\
 0         & 0        &  {-1} \\
 0 &  0  &  0\\
 0 &  0                &  0
\end{pmatrix}\quad \text{and}\quad 
S  = \operatorname{diag}\begin{pmatrix}
                        \se\\{0.5\se}\\ \sfluxx\\\sfluxx\\ {\stenseurxx}\\ {\stenseurxx}
                       \end{pmatrix}.
$$

The results are given in  Figure~\ref{fig:34} is better than the one given for isotropy only at the second order: the plot of $\rho_0-\rho_{\frac{\pi}{4}}$ in Figure~\ref{fig:err34} gives an error of size 4.2E-6. 

\begin{figure}[htbp]
\begin{minipage}{0.495\textwidth}
\begin{center}
  \includegraphics[width=\textwidth]{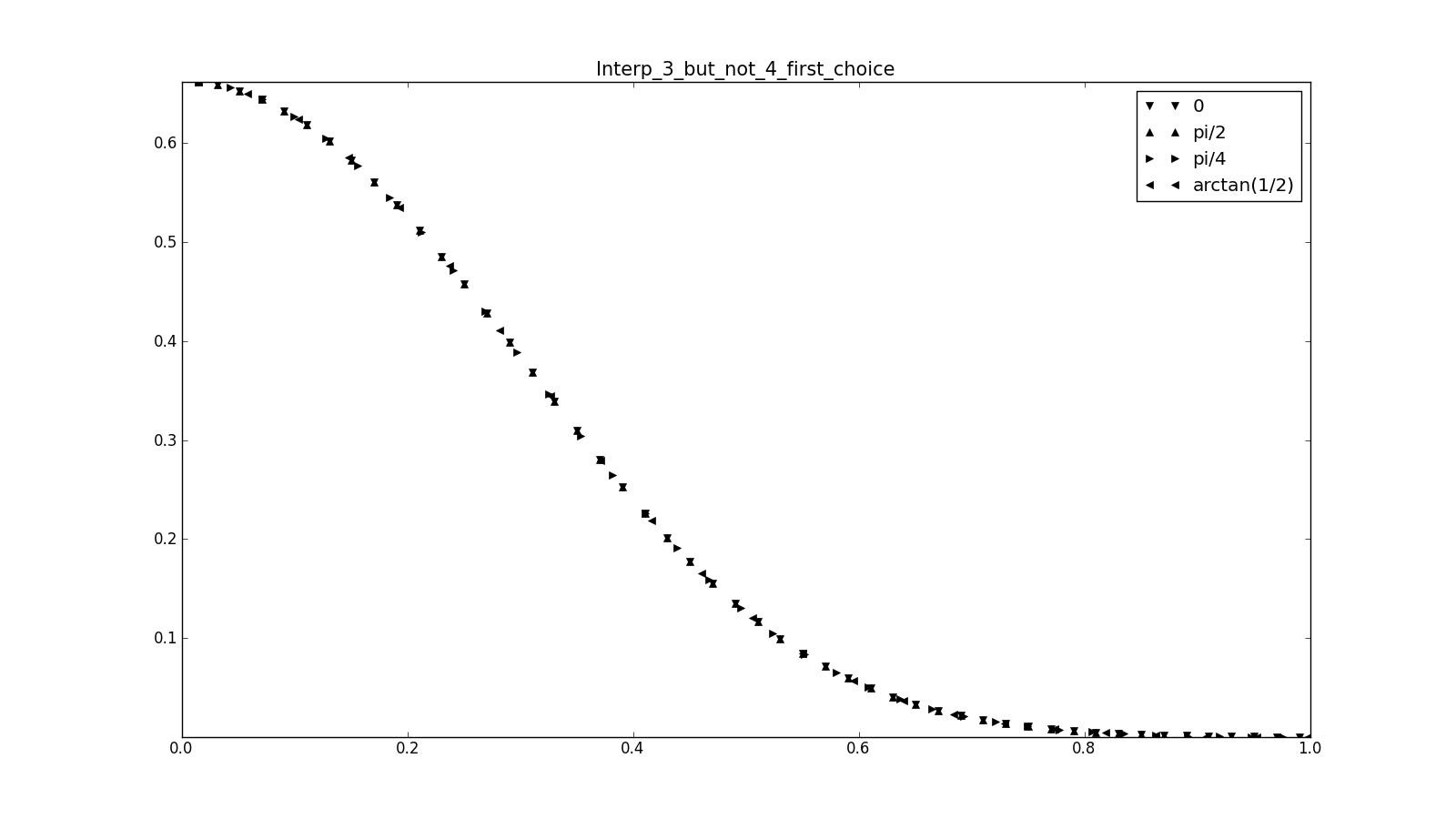}
  \caption{Isotropy at third order}
  \label{fig:34}
\end{center}
\end{minipage}
\begin{minipage}{0.495\textwidth}
\begin{center}
  \includegraphics[width=\textwidth]{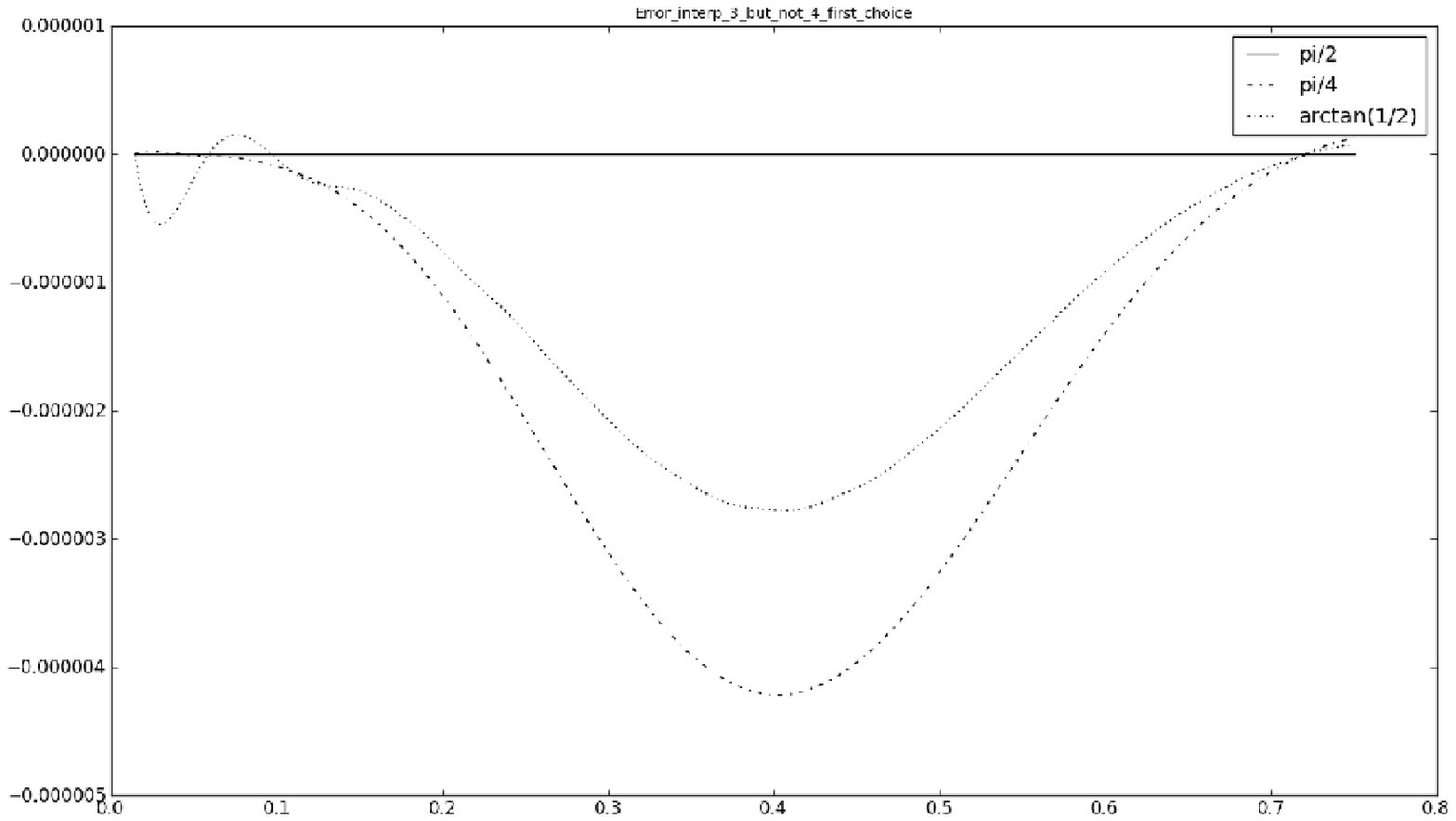}
  \caption{Isotropy error at third order}
\label{fig:err34}
\end{center}
\end{minipage}
\end{figure}

And finally, we consider the following parameters for which the scheme is isotropic at fourth order:
$$E  = 
 \begin{pmatrix}
{-2}\  &  0  &  0\\
 {1}&  0  &  0\\
 0         &  {-1}        &  0\\
 0         & 0        &  {-1} \\
 0 &  0  &  0\\
 0 &  0                &  0
\end{pmatrix}\quad \text{and}\quad 
S  = \operatorname{diag}\begin{pmatrix}
                        \se\\ \se\\ \sfluxx\\\sfluxx\\ {\se}\\ {\se}
                       \end{pmatrix}.
$$
where $\zeta= 1.84\,10^{-5}=\mu$, $\se = 3\zeta/(\lambda\Delta x)=1.9977944349438221$, $\sfluxx = 6.0(2.0-\stenseurxx)/(6.0-\stenseurxx)=0.0022055650561781941$.

The result is given in Figure~\ref{fig:4} and  the plot of $\rho_0-\rho_{\frac{\pi}{4}}$ in Figure~\ref{fig:err4} gives an error of size 5.5E-7. 

\begin{figure}[htbp]
\begin{minipage}{0.495\textwidth}
\begin{center}
  \includegraphics[width=\textwidth]{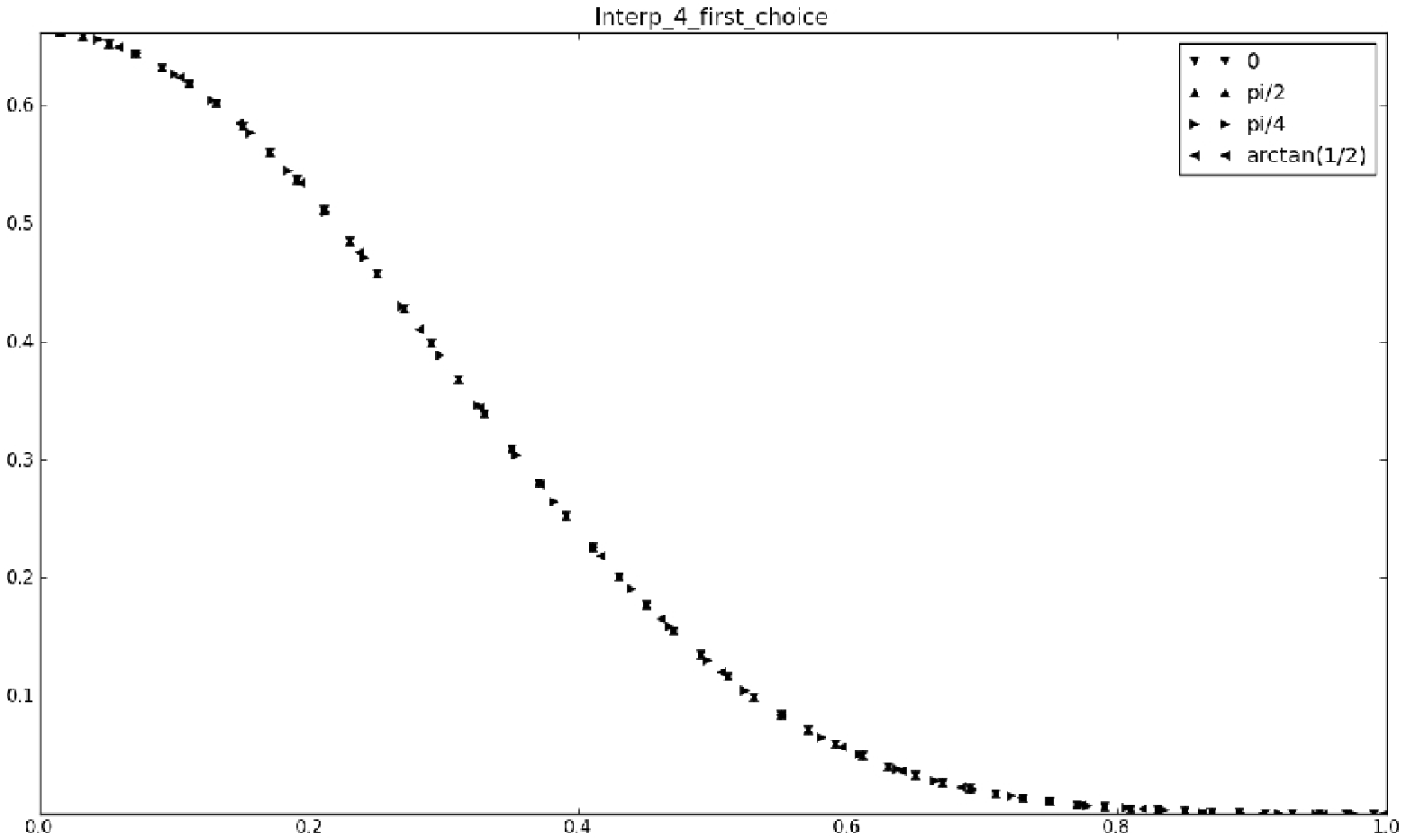}
  \caption{Isotropy at fourth order}
  \label{fig:4}
\end{center}
\end{minipage}
\begin{minipage}{0.495\textwidth}
\begin{center}
  \includegraphics[width=\textwidth]{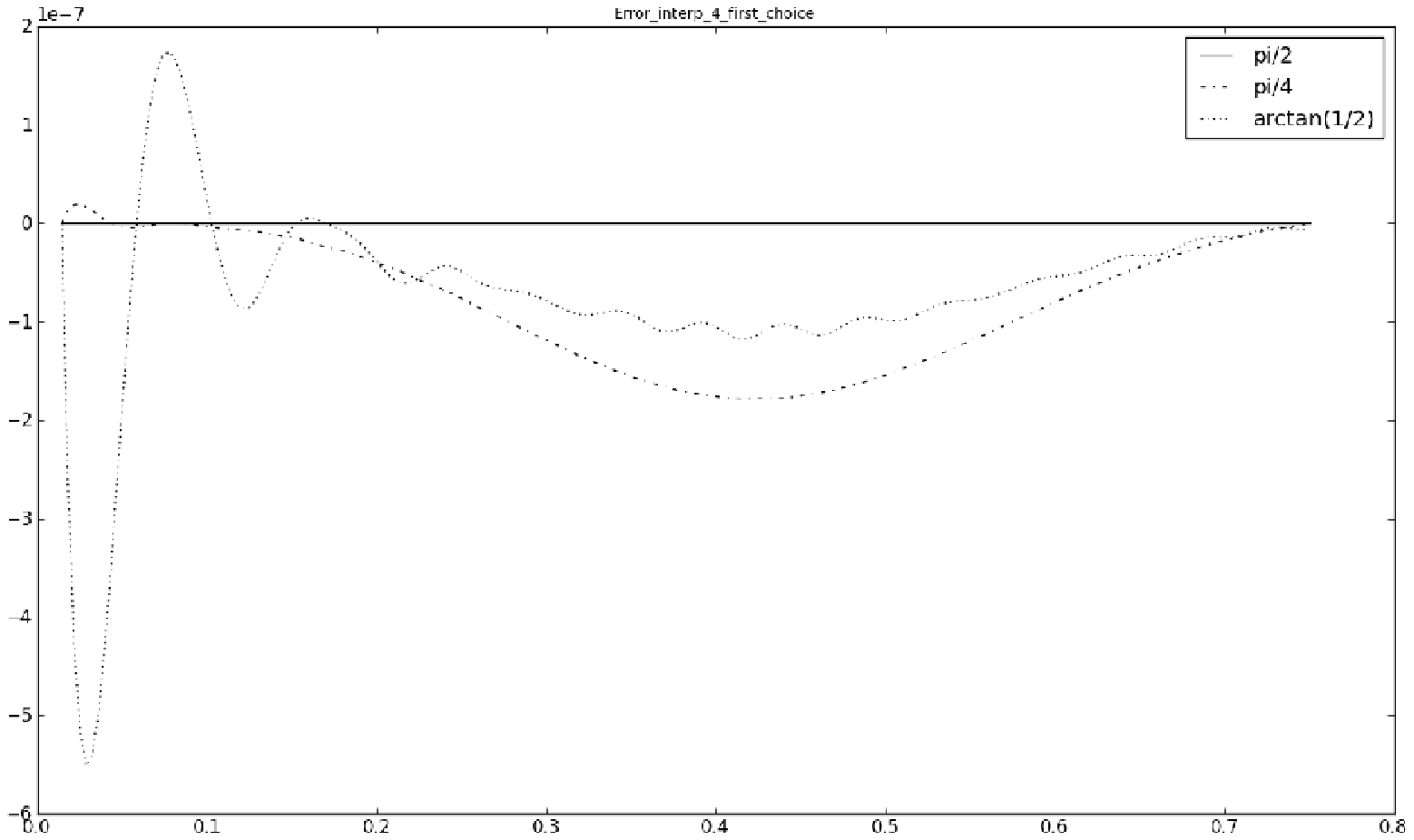}
  \caption{Isotropy at fourth order}
\label{fig:err4}
\end{center}
\end{minipage}
\end{figure}

\section*{Conclusion}
We used a general methodology that guarantees isotropy of a lattice Boltzmann scheme for a given order. 
This approach is based on the expansion of the equivalent PDEs at an arbitrary order and the invariance by the group operation $\Phi_n$. We have detailed all the possible cases for the basic scheme \ddqn applied to a linearized fluid mechanics. Results (up to the second order) have also been proposed for the \ddqt scheme. Using \guill{inappropriate} choices of parameters in the \ddqn scheme, elementary test cases hightlight the lack of isotropy at various orders. This work can be extended without conceptional difficulties for 3D lattice Boltzmann schemes and it will be done in a forthcoming contribution. 

\section*{Annex}
In this annex, we propose some details concerning the isotropy at third order for the \ddqt scheme. This work is in progress:  we express the different cases that have to be study. 

In fact, knowing properties \ppts{1}{9}, eqs.~\eqq{eq11113} and \eqq{eq22233} give multiple choices between the thirteen following relations: 
\begin{center}
\begin{tabular}{p{0.7cm}>{$}p{3.8cm}<{$}p{0.8cm}>{$}p{6.5cm}<{$}}
\ppt{a}& \efluxxqx = -3, & \ppt{f2}& \mbox{Second value for }\eecubeqx, \\
\ppt{b}& \sigmatenseurxy =\sigmatenseurxx, & \ppt{g}&3(\sigmatenseurxx+\sigmatenseurxy)(\sigmaxecarre+\sigmayecarre)=1,\\
\ppt{c1}& \mbox{First value for }\eecubeqy ,& \ppt{f3}&\mbox{Third value for }\eecubeqx,\\
\ppt{d}& \sigmayecarre = 1/12\sigmatenseurxx, &\ppt{g2}&6\sigmatenseurxy(\sigmaxecarre+\sigmayecarre)+3\sigmatenseurxx(\sigmayecarre+3\sigmaxecarre),\\
\ppt{e}& \sigmaxecarre = 1/12\sigmatenseurxx,&\ppt{c3}&\mbox{Third value for }\eecubeqy ,\\
\ppt{f1}& \mbox{First value for }\eecubeqx,&\ppt{c4}&\mbox{Fourth value for }\eecubeqy.\\
\ppt{c2}&  \mbox{Second value for }\eecubeqy ,& &
\end{tabular}
\end{center}

The investigation of these equations involves some of these thirteen relations. More precisely, we have:\\
- eq. \eqq{eq11113cos1sin1} yields to a dichotomy between \ppt{a} and \ppt{b},\\
- eq. \eqq{eq32223cos0sin0} yields to a dichotomy between \ppt{c1} and \ppt{d},\\
- eq. \eqq{eq32223cos0sin1} yields to a dichotomy between \ppt{e} and \ppt{f1},\\
- eq. \eqq{eq32223cos1sin0} yields to a dichotomy between \ppt{c2} and \ppt{e},\\
- eq. \eqq{eq32223cos4sin1} yields to a dichotomy between \ppt{f2} and \ppt{g1},\\
- eq. \eqq{eq32223cos2sin1} yields to a dichotomy between \ppt{f3} and \ppt{g2},\\
- eq. \eqq{eq32223cos3sin0} yields to a dichotomy between \ppt{c3} and \ppt{g2},\\
- eq. \eqq{eq32223cos5sin0} yields to a dichotomy between \ppt{g1} and \ppt{c4}.

That gives \textit{a priori} $2^8=256$ possibilities. 
However preliminary calculations yield that only seventeen cases remain: in order to have isotropy of third order, it is necessary to satisfy at least one of these sets of properties. \\
1)  \ppt{a}, \ppt{c3}, \ppt{c4}, \ppt{d}, \ppt{e}, \ppt{f2} and \ppt{f3} \\
2)  \ppt{b}, \ppt{d}, \ppt{e}, \ppt{g1} and \ppt{g2} \\
3)  \ppt{a}, \ppt{c2}, \ppt{c3}, \ppt{c4}, \ppt{d}, \ppt{f1}, \ppt{f2} and \ppt{f3} \\
4)  \ppt{a}, \ppt{c2}, \ppt{c4}, \ppt{d}, \ppt{f1}, \ppt{f2} and \ppt{g2} \\
5)  \ppt{a}, \ppt{c2}, \ppt{c3}, \ppt{d}, \ppt{f1}, \ppt{f3} and \ppt{g1} \\
6)  \ppt{a}, \ppt{c1}, \ppt{c3}, \ppt{c4},  \ppt{e}, \ppt{f2} and \ppt{f3} \\
7)  \ppt{a}, \ppt{c1}, \ppt{c4},  \ppt{e}, \ppt{f2} and  \ppt{g2} \\
8)  \ppt{a}, \ppt{c1}, \ppt{c3},  \ppt{e}, \ppt{f3}  and \ppt{g1} \\
9)  \ppt{a}, \ppt{c1}, \ppt{c2}, \ppt{c3}, \ppt{c4},  \ppt{f1}, \ppt{f2} and \ppt{f3} \\
10)  \ppt{a}, \ppt{c1}, \ppt{c2}, \ppt{c4},  \ppt{f1}, \ppt{f2} and \ppt{g2} \\
11)  \ppt{a}, \ppt{c1}, \ppt{c2}, \ppt{c3},  \ppt{f1}, \ppt{f3} and \ppt{g1} \\
12)  \ppt{a}, \ppt{c1}, \ppt{c2},  \ppt{f1}, \ppt{g1} and \ppt{g2} \\
13)  \ppt{b}, \ppt{c1}, \ppt{c2}, \ppt{c3}, \ppt{c4},  \ppt{f1}, \ppt{f2} and \ppt{f3} \\
14)  \ppt{b}, \ppt{c1}, \ppt{c2}, \ppt{c4},  \ppt{f1}, \ppt{f2} and \ppt{g2} \\
15)  \ppt{b}, \ppt{c1}, \ppt{c2}, \ppt{c3},  \ppt{f1}, \ppt{f3} and \ppt{g1} \\
16)  \ppt{b}, \ppt{c2}, \ppt{c3}, \ppt{c4},  \ppt{d}, \ppt{f1}, \ppt{f2} and \ppt{f3} \\
17)  \ppt{b}, \ppt{c1}, \ppt{c3}, \ppt{c4}, \ppt{e},  \ppt{f2} and \ppt{f3}

The study of  these cases is in progress and we cannot then give a full characterization of it. However 
we know some examples of matrices $E$ and $S$ that involve isotropy at third order. 
We propose here two cases given by properties \ppt{a}, \ppt{c3}, \ppt{c4}, \ppt{d}, \ppt{e}, \ppt{f2} and \ppt{f3}.

\begin{eqnarray*}
E = \begin{pmatrix}
\eerho\lambda^2\  &  0  &  0\\
0 &  0  &  0\\
0 &  0  &  0\\
0        &  -3\lambda^2        &  0\\
0        &  0       &  -3\lambda^2\\
0        &  \dfrac{31}{6}\lambda^4        &  0\\
0        &  0        &   \dfrac{31}{6}\lambda^4\\
\eecarrerho\lambda^4 &   \eecarreqx\lambda^3               &  \eecarreqy\lambda^3\\
\left(\dfrac{274}{39}-\dfrac{67}{462}\eecarrerho-\dfrac{137}{3003}\eerho\right)\lambda^6 &   -\dfrac{67}{462}\eecarreqx\lambda^5                & -\dfrac{67}{462}\eecarreqy\lambda^5 \\
\exxerho\lambda^4   &   \exxeqx\lambda^3		 &   \exxeqy\lambda^3
\end{pmatrix},\\
S = \mbox{diag}\trans{\left(\stenseurxx,\stenseurxx,\stenseurxy,\sfluxx,\sfluxx,\sfluxx,\sfluxx,\secarre,\secube,\sxxe\right)},\\ \mbox{ where }\sfluxx = 3\dfrac{2-\stenseurxx}{3-\stenseurxx},
\end{eqnarray*}

 or for instance

\begin{eqnarray*}
&E = \begin{pmatrix}
\eerho\lambda^2\  &  0  &  0\\
0 &  0  &  0\\
0 &  0  &  0\\
0        &  -3\lambda^2        &  0\\
0        &  0       &  -3\lambda^2\\
0        &  \dfrac{31}{6}\lambda^4        &  0\\
0        &  0        &   \dfrac{31}{6}\lambda^4\\
\left(-\dfrac{3234}{13}-\dfrac{361}{26}\eerho\right)\lambda^4 &   0               &  0\\
\left(\dfrac{1681}{39}+\dfrac{307}{156}\eerho\right)\lambda^6 &   0                & 0 \\
0   &   0	 &   0
\end{pmatrix},&\\ 
&S = \mbox{diag}\trans{\left(\se,\stenseurxx,\stenseurxy,2\frac{7\se+5\stenseurxx-6\stenseurxx\se}{7\se+5\stenseurxx-4\stenseurxx\se},\sfluxy,\sxecarre,\sxecarre,\secarre,\secube,\sxxe\right)},&\\
&\mbox{ where }\sxecarre = 3\dfrac{2-\stenseurxx}{3-\stenseurxx}.&
\end{eqnarray*}

\begin{rmk}
 Since the \ddqt scheme takes into account the velocities of the \ddqn and 4 additional ones, it can be read as a generalization of the \ddqn scheme. In order to illustrate this remark, we propose here a set of coefficients that gives isotropy at third order for both \ddqn and \ddqt schemes:
\begin{eqnarray*}
&E = \begin{pmatrix}
\eerho\lambda^2\  &  0  &  0\\
0 &  0  &  0\\
0 &  0  &  0\\
0        &  -\lambda^2        &  0\\
0        &  0       &  -\lambda^2\\
0        &  -\dfrac{1}{12}\lambda^4        &  0\\
0        &  0        &   -\dfrac{1}{12}\lambda^4\\
\left(-\dfrac{3234}{13}-\dfrac{361}{26}\eerho\right)\lambda^4 &   0               &  0\\
\left(\dfrac{1681}{39}+\dfrac{307}{156}\eerho\right)\lambda^6 &   0                & 0 \\
0   &   0	 &   0
\end{pmatrix},&\\ 
&S = \mbox{diag}\trans{\left(\stenseurxx,\stenseurxx,\stenseurxx,\fluxx,\sfluxx,\sfluxx,\sfluxx,\secarre,\secube,\sxxe\right)},&\\ 
&\mbox{ where }\sfluxx = 3\dfrac{2-\stenseurxx}{3-\stenseurxx}.&
\end{eqnarray*}
\end{rmk}

\section*{Acknowledgments}
This work has been financially supported by the French Ministry of Industry (DGCIS) and the Region Ile-de-France in the framework of the LaBS Project \cite{labs}.

The authors are very grateful to Li-Shi Luo for his really interesting remark during the ICMMES that showed that the presented results at the Conference were very preliminary. 
\section*{References}
\bibliographystyle{plain}
\nocite{}

\bibliography{ADG_bibliographie}

\begin{thebibliography}{10}

\bibitem{labs}
http://www.pole-moveo.org/pdf-projets-das/$\mbox{L}$abs-$\mbox{F}$.pdf.

\bibitem{Dpreparation}
A.~Augier, F.~Dubois, and B.~Graille.
\newblock \textit{In preparation}.

\bibitem{ADG11}
A.~Augier, F.~Dubois, and B.~Graille.
\newblock Isotropy conditions for lattice $\mbox{B}$oltzmann schemes.
  $\mbox{A}$pplication to acoustic.
\newblock \textit{Submitted for ESAIM proceedings}.

\bibitem{CGO08}
H.~Chen, I.~Goldhirsch, and S.~A. Orszag.
\newblock Discrete rotational symmetry, moment isotropy, and higher order
  lattice $\mbox{B}$oltzmann models.
\newblock {\em Journal of Scientific Computing}, 34:87--112, 2008.

\bibitem{CWSD92}
S.~Chen, Z.~Wang, X.~Shan, and G.~D. Doolen.
\newblock Lattice $\mbox{B}$oltzmann computational fluid dynamics in three
  dimension.
\newblock {\em Journal of Statistical Physics}, 68:379--400, 1992.

\bibitem{D01}
P.~J. Dellar.
\newblock Bulk and shear viscosities in lattice boltzmann equations.
\newblock {\em Physical Review E}, 64, 2001.

\bibitem{H94}
D.~d'Humi\`eres.
\newblock Generalized $\mbox{L}$attice-$\mbox{B}$oltzmann equations.
\newblock In {\em Rarefied Gas Dynamics: Theory and simulation}, volume 159,
  pages 450--458. AIAA Progress in astronomics and aeronautics, 1994.

\bibitem{D05}
F.~Dubois.
\newblock Equivalent partial differential equations of a lattice
  $\mbox{B}$oltzmann scheme.
\newblock {\em Computer and Mathematics with Applications}, 55:1441--1449,
  2005.

\bibitem{DL09}
F.~Dubois and P.~Lallemand.
\newblock Towards higher order lattice $\mbox{B}$oltzmann schemes.
\newblock {\em Journal of Statistical mechanics: theory and experiment}, 2009.

\bibitem{DL11}
F.~Dubois and P.~Lallemand.
\newblock Quartic parameters for acoustic applications of lattice
  $\mbox{B}$oltzmann scheme.
\newblock {\em Computers And Mathematics with Applications}, 61(12):3404--3416,
  2011.

\bibitem{LL00}
P.~Lallemand and L.-S. Luo.
\newblock Theory of the lattice $\mbox{B}$oltzmann method: Dispersion,
  dissipation, isotropy, $\mbox{G}$alilean invariance, and stability.
\newblock {\em Physical Review E}, 61:6546--6562, 2000.

\bibitem{LL03}
P.~Lallemand and L.-S. Luo.
\newblock Theory of the lattice $\mbox{B}$oltzmann method: acoustic and thermal
  properties in two and three dimensions.
\newblock {\em Physical Review E}, 68, 2003.

\bibitem{vdSE99}
R.~G.~M van~der Sman and M.~H. Ernst.
\newblock Diffusion lattice $\mbox{B}$oltzmann scheme on a orthorhonbic
  lattice.
\newblock {\em Journal of Statistical Physics}, 94(1/2):203--217, 1999.

\bibitem{YGL05}
H.~Yu, S.~S.Girimaji, and L-S Luo.
\newblock $\mbox{DNS}$ and $\mbox{LES}$ of decaying isotropic turbulence with
  and without frame rotation using the lattice $\mbox{B}$oltzmann method.
\newblock {\em Journal of Computational Physics}, 209:599--616, 2005.

\end{thebibliography}
\end{document}